\title{On uniform distribution for invariant extensions of the linear Lebesgue measure}
 \newtheorem{theorem}{Theorem}[section]
\newtheorem{lemma}[theorem]{Lemma}
\theoremstyle{definition}
\newtheorem{definition}[theorem]{Definition}
\theoremstyle{remark}
\newtheorem{remark}[theorem]{Remark}
\begin{document}

\maketitle

\begin{abstract}The concept  of uniform distribution in $[0,1]$ is extended for a certain  strictly separated maximal  (in the sense of cardinality)  family $(\lambda_t)_{t \in [0,1]}$  of  invariant extensions  of the linear Lebesgue measure $\lambda$   in $[0,1]$, and it is shown that the $\lambda_t^{\infty}$ measure of the set of all $\lambda_t$-uniformly distributed sequences is equal to $1$, where $\lambda_t^{\infty}$  denotes the infinite power of the measure $\lambda_t$. This  is an analogy  of Hlawka's (1956) theorem for  $\lambda_t$-uniformly
distributed sequences.  An analogy  of  Weyl's (1916)  theorem is  obtained  in the similar manner. 
\end{abstract}

\section{Introduction}

 The theory of uniform distribution is concerned
with the distribution of real numbers in the unit interval
$(0, 1)$ and  it's development started with Hermann Weyl's celebrated
paper \cite{Weyl}.  This theory gives a useful technique for numerical calculation exactly of the one-dimensional Riemann integral over $[0,1]$. More pricisely,
the sequence of real numbers $(x_n)_{n \in N} \in [0,1]^{\infty} $ is
uniformly distributed in $[0,1]$  if and only if for every real-valued Riemann integrable function $f$  on $[0, 1]$  the equality
$$
\lim_{ N \to \infty}\frac{\sum_{n=1}^Nf(x_n)}{N} =\int_{0}^{1} f(x) dx. \eqno (1.1)
$$
holds( see, for example \cite{KuNi74}, Corollary 1.1, p. 3).  Main corollaries of this assertion successfully were used in Diophantine approximations and have
applications to Monte-Carlo integration (cf. \cite{Weyl},\cite{Hardy1},\cite{Hardy2},\cite{KuNi74}).
Note that the set $S$ of all uniformly distributed sequences in $[0,1]$ viewed as a subset of $[0,1]^{\infty}$ has full $\lambda^{\infty}$-measure,
 where $\lambda^{\infty}$ denotes the infinite power of the linear lebesgue measure $\lambda$ in $[0,1]$( cf.  \cite{KuNi74}, Theorem 2.2(Hlawka), p. 183). For a fixed Lebesgue integrable function $f$ in $[0,1]$,  one can state a question asking {\it what is a maximal subset $S_f$ of $S$ each element of which can be used for calculation it's  Lebesgue integral over $[0,1]$ by the formula (1.1) and whether this subset has the full $\lambda^{\infty}$-measure}. This question has been  resolved  positively by  Kolmogorov's Strong Law of Large Numbers.
There naturally arises another question asking whether can be  developed analogous methodology for invariant extensions of the Lebesgue measure in $[0,1]$ and whether main results of the uniform distribution theory  will be  preserved in such a situation. In the present manuscript we consider this question for a certain strictly separated  \footnote{The family of probability measures $(\mu_i)_{i \in I}$ defined on the measure space $(E,S)$ is called strictly separated if there exists a partition $\{ E_i : i \in N ~\& E_i \in S\}$ of the set $E$  such that $\mu_i(E_i)=1$ for $ i \in I$.  } maximal (in the sense of cardinality) family of invariant extensions of the linear Lebesgue measure in $[0,1]$. In our investigations we essentially use the methodology developed in works \cite{Khar83}, \cite{Shiryaev1980}, \cite{KuNi74}.

The rest of the present paper it the following.

In Section 2  we consider some auxiliary facts from the theory of invariant extensions of the Lebesgue measure and from the probability theory.  In Section 3 we present our main results. Section 4 presents historical background  of the theory of  invariant extensions of the $n$-dimensional Lebesgue measure as well Haar measure in locally compact Hausdorff topological groups. In Section 5 we state a uniform distribution problem for invariant extensions of the Haar measure in locally compact Hausdorff topological groups.

\section{Some auxiliary notions and facts from the theory of invariant extensions of the Lebesgue  measure}

As usual, we  denote by $R$  the real axis  with usual metric and addition $''+''$ operation under which $R$ stands a locally compact $\sigma$-compact Hausdorff topological group.
We denote by $\lambda$ the linear Lebesgue measure in $R$.

\begin{lemma}(\cite{Khar83}, Lemma 6, p. 174)
Let  $K$ be a shift-invariant $\sigma$-ideal
of subsets of the real axis $R$ such
that
$$(\forall Z)(Z \in K \rightarrow \lambda_{\ast}(Z)=0),$$
where $\lambda_{\ast}$ denotes an inner measure defined by the linear Lebesgue measure $\lambda$.
Then the functional $\mu$  defined by
$$\mu((X \cup Z') \cup Z'')=\lambda(X),$$
where $X$ is Lebesgue measurable  subset of $R$ and $Z'$ and $Z''$are elements
of the $\sigma$-ideal $K$, ~is  a  shift-invariant extension of the Lebesgue
measure $\lambda$.
\end{lemma}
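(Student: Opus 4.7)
The plan is to verify, in the order that best isolates where the hypothesis is actually used, four things: $\mu$ is well-defined on representations of the stated form, $\mu$ agrees with $\lambda$ on Lebesgue measurable sets, $\mu$ is a countably additive measure on the $\sigma$-algebra $\mathcal{M}$ generated by the Lebesgue sets together with $K$, and $\mu$ is shift-invariant. The single analytic input driving everything is the inner-measure hypothesis $\lambda_{\ast}(Z)=0$ for $Z \in K$, which translates to the usable fact that every Lebesgue measurable subset of an element of $K$ is $\lambda$-null.

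The first step is the crux. Suppose one and the same set $A$ admits two decompositions $A = (X_1 \cup Z_1') \cup Z_1'' = (X_2 \cup Z_2') \cup Z_2''$ with $X_1, X_2$ Lebesgue measurable and $Z_1', Z_1'', Z_2', Z_2'' \in K$. Then the Lebesgue measurable symmetric difference $X_1 \triangle X_2$ is contained in the $\sigma$-ideal member $Z_1' \cup Z_1'' \cup Z_2' \cup Z_2''$, so by the inner-measure hypothesis $\lambda(X_1 \triangle X_2)=0$, hence $\lambda(X_1)=\lambda(X_2)$. This proves well-definedness; taking $Z'=Z''=\emptyset$ then yields $\mu\!\restriction\!\mathrm{Leb} = \lambda$.

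Next I would establish countable additivity. Given pairwise disjoint $A_n = (X_n \cup Z_n')\cup Z_n''$, the disjointness forces $X_n \cap X_m \subseteq Z_n' \cup Z_n'' \cup Z_m' \cup Z_m''$ for $n\ne m$, so by the same inner-measure argument $\lambda(X_n \cap X_m)=0$. A standard disjointification then gives $\lambda(\bigcup_n X_n)=\sum_n \lambda(X_n)$. Since the $\sigma$-ideal property of $K$ lets us assemble $\bigcup_n A_n$ in the canonical form $(\bigcup_n X_n \cup \bigcup_n Z_n') \cup \bigcup_n Z_n''$, this gives $\mu(\bigcup_n A_n)=\sum_n \mu(A_n)$. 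Shift-invariance is then immediate: the translate of a representation is again a representation of the same type with unchanged Lebesgue part, because $\lambda$ is shift-invariant on Lebesgue sets and $K$ is shift-invariant by hypothesis.

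The main obstacle, I expect, is not the measure-theoretic content but the bookkeeping that confirms $\mathcal{M}$ is genuinely a $\sigma$-algebra — specifically, closure under complements, where $((X \cup Z') \cup Z'')^c = X^c \cap (Z' \cup Z'')^c$ must be rewritten in the prescribed form by absorbing the set-theoretic residue $X^c \cap (Z' \cup Z'')$ (a subset of a $K$-set, hence in $K$) back into the ideal part. Once this rewriting is carried out, the rest of the verification rides entirely on the inner-measure zero hypothesis through the symmetric-difference trick used above.
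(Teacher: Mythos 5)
The paper does not actually prove this lemma: it is imported from Kharazishvili, with the proof deferred to \cite{Khar83} and \cite{Khar96}. So the only comparison available is with the standard argument, and your proposal is essentially that argument. The inner-measure hypothesis is used exactly where it should be, namely to show that any Lebesgue measurable set covered by a member of $K$ is $\lambda$-null; that gives well-definedness via $X_1\triangle X_2\subseteq Z_1'\cup Z_1''\cup Z_2'\cup Z_2''$, almost-disjointness of the measurable parts in the countable-additivity step, and the extension and shift-invariance properties. All of that is correct.

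The one step that does not survive as written is closure of the domain under complementation, and you correctly identified it as the delicate point but then waved it through. With the decomposition literally as printed, $(X\cup Z')\cup Z''=X\cup Z$ with $Z=Z'\cup Z''\in K$, and the class $\{X\cup Z: X\in\mathrm{dom}(\lambda),\ Z\in K\}$ is in general \emph{not} closed under complements: there is nowhere to ``absorb'' the residue $X^c\cap Z$, because the prescribed form only allows adjoining members of $K$, never deleting them. Concretely, take $K$ to be the shift-invariant $\sigma$-ideal generated by a set $V$ with $\lambda_\ast(V)=0$ and full outer measure (e.g.\ $V=R\setminus X_t$, as in the paper's own Lemma 2.3). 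Then $V=\emptyset\cup V$ lies in the class, but $V^c$ does not: any measurable $Y\subseteq V^c$ is null because $V$ has full outer measure, and $V^c\setminus Y$ then still has full outer measure on some interval, so it cannot be covered by a member of $K$; hence no representation $Y\cup W$ exists. The fix is the form the paper itself uses in Section 4 (Method I): the decomposition must be read as $(X\cup Z')\setminus Z''$. With that form the complement computation $((X\cup Z')\setminus Z'')^c=(X^c\cup Z'')\setminus(Z'\setminus Z'')$ lands back in the class, your residue really is absorbed into the \emph{subtracted} ideal part, and every other step of your argument goes through verbatim (the containment $X_1\triangle X_2\subseteq Z_1'\cup Z_1''\cup Z_2'\cup Z_2''$ and the additivity bookkeeping are unaffected). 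So: right argument overall, but the complementation step needs the corrected form of the decomposition, and as you wrote it the claim that the residue can be absorbed is false.
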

For the proof  of Lemma 2.1, see, e.g., \cite{Khar83}, \cite{Khar96}.

\begin{lemma} (\cite{Khar83}, Lemma 4, p. 164)
There exists a family $(X_i)_{i \in [0,1]}$ of subsets of the real axis
$R$ such that:

$1)~~ (\forall i)(\forall i')(i \in [0,1] \ \& \ i' \in [0,1] \ \& \ i
\neq i' \rightarrow X_i \cap X_{i'}=\emptyset );$

$2)~~ (\forall i)(\forall F)(i \in [0,1] ~\&~ (F$ is a closed subset
of the real axis $R$ with~$\lambda(F) > 0) \rightarrow
\mbox{card}(X_i \cap F)=c);$

$3)~~ (\forall I')(\forall g)(I' \subseteq [0,1] \ \& \ g \in R
\rightarrow card(g+(\bigcup_{i \in I'} X_i) \triangle (\bigcup_{i
\in I'} X_i)) < c ).$
\end{lemma}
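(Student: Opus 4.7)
The plan is to construct $(X_i)_{i\in[0,1]}$ by transfinite recursion of length $c=2^{\aleph_0}$, in the spirit of classical Bernstein/Sierpi\'nski-type constructions. First I would enumerate the closed subsets of $R$ of positive Lebesgue measure as $(F_\xi)_{\xi<c}$ (exactly $c$-many, since $R$ is second countable) and fix a bijection $c\leftrightarrow[0,1]\times c$, so that each ordinal $\xi<c$ acquires a label $(i_\xi,\eta_\xi)$ prescribing: ``at stage $\xi$, add a new element of $F_{\eta_\xi}$ to $X_{i_\xi}$.'' Every label $(i,\eta)$ appears at cofinally many stages, so property~(2) will follow automatically once each stage succeeds.

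To make property~(3) attainable I would fix a Hamel basis $B$ of $R$ over $\mathbb{Q}$ and exploit the fact that every $g\in R$ has only \emph{finite} $\mathbb{Q}$-support in $B$. Translations thus act ``locally'' on Hamel coordinates, and one can control their effect by constructing each $X_i$ as a union of equivalence classes of a relation read off from the Hamel basis. The classes are chosen so that, for every $I'\subseteq[0,1]$, the union $Y=\bigcup_{i\in I'}X_i$ is stable under any fixed translation up to a symmetric difference of cardinality $<c$, uniformly in $I'$ and $g$. At stage $\xi<c$, the partial family $(X_i^{<\xi})_{i\in[0,1]}$ together with its previously considered translates forms a ``forbidden'' set of cardinality $<c$; since $|F_{\eta_\xi}|=c$, a valid $x_\xi\in F_{\eta_\xi}$ outside the forbidden set exists, and one adjoins it (together with the prescribed orbit fragment) to $X_{i_\xi}$. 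Setting $X_i=\bigcup\{x_\xi:i_\xi=i\}$ at the end, property~(1) holds by the avoidance rule and property~(2) by the cofinal repetition of labels.

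The main obstacle is property~(3). Individually almost-invariant $X_i$'s are not enough, since $|I'|$ may equal $c$ and a $c$-fold union of $<c$-sized discrepancies could in principle reach $c$. The construction must therefore impose a \emph{uniform} bound: for each $g\in R$ only $<c$ of the indices $i$ should have $X_i$ genuinely disturbed by $g$, and each such disturbance should cover only $<c$ points, so that the aggregate symmetric difference stays below $c$ regardless of $I'$. Reconciling this uniform near-invariance with the richness~(2) and disjointness~(1) through the Hamel-basis bookkeeping is the technically delicate heart of the argument; its detailed realization is carried out in \cite{Khar83}.
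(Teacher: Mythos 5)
The paper does not actually prove this lemma: it is quoted verbatim from Kharazishvili's book (\cite{Khar83}, Lemma 4, p.~164), so there is no internal argument to compare yours against. Judged on its own terms, your proposal identifies the right toolbox (a length-$c$ transfinite recursion against an enumeration of closed sets of positive measure, plus a Hamel basis of $R$ over $\mathbb{Q}$ to control translations), but it stops short of a proof at exactly the point where the content lies, and your own last sentence concedes this by deferring ``the technically delicate heart of the argument'' back to \cite{Khar83}. Concretely: you never specify the equivalence relation ``read off from the Hamel basis,'' you never explain how adjoining ``orbit fragments'' at stage $\xi$ is reconciled with the disjointness in (1), and you never verify (3).

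More seriously, the route you sketch for (3) is flawed as stated. You propose to bound $\mathrm{card}\bigl((g+Y)\triangle Y\bigr)$ for $Y=\bigcup_{i\in I'}X_i$ by arranging that only fewer than $c$ indices $i$ are ``disturbed'' by $g$ and that each disturbance has cardinality less than $c$; but a union of fewer than $c$ sets, each of cardinality less than $c$, can have cardinality $c$ when $c$ is singular, and the regularity of $c$ is not provable in ZFC. The actual mechanism is different and avoids any summation over indices: well-order a Hamel basis as $\{b_\eta:\eta<\alpha\}$ ($\alpha$ the initial ordinal of cardinality $c$), let $V_\xi$ be the $\mathbb{Q}$-span of $\{b_\eta:\eta<\xi\}$, and let $f(x)$ be the largest basis index occurring in the expansion of $x$. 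Each $X_i$ is taken of the form $f^{-1}(A_i)$ for a partition $(A_i)_{i\in[0,1]}$ of $\alpha$; then for fixed $g$ with $g\in V_{\xi_0+1}$ one has $f(x+g)=f(x)$ for every $x\notin V_{\xi_0+1}$, so $(g+Y)\triangle Y\subseteq V_{\xi_0+1}\cup(g+V_{\xi_0+1})$, a \emph{single} set of cardinality less than $c$ depending only on $g$ and not on $I'$. The transfinite recursion is then used only to choose the ordinal sets $A_i$ so that each $f^{-1}(A_i)$ meets every closed set of positive measure in $c$ points, which is where your enumeration of the $F_\xi$ correctly enters. Without this uniform, $I'$-independent exceptional set, property (3) does not follow from your scheme.
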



\begin{lemma}
There exists a family
$(\mu_t)_{t \in [0,1]}$ of measures  defined on some shift-invariant $\sigma$-algebra $S(R)$
of subsets of the real axis $R$ such that:

$1)~~ (\forall t)(t \in [0,1] \rightarrow$ the measure $\mu_t$ is  a
shift-invariant extension of the linear Lebesgue measure $\lambda);$

$2)~~ (\forall t)(\forall t')(t \in [0,1] \ \& \ t' \in [0,1] \ \& \ t
\neq t' \rightarrow \mu_t$ and $\mu_{t'}$ are  orthogonal
\footnote{~~ $\mu_t$ and $\mu_{t'}$ are called orthogonal if there
exists $X \in S(R)$ such that $\mu_t(X)=0$ and $\mu_{t'}(R
\setminus X)=0.$}  measures. Moreover,  $\mu_t(R \setminus X_t)=0$ for each $t \in [0,1]$, where $(X_t)_{t \in [0,1]}$ comes from Lemma 2.2. $).$
\end{lemma}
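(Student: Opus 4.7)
The plan is to apply Lemma~2.1 separately for each $t \in [0,1]$ with an appropriately chosen shift-invariant $\sigma$-ideal $K_t$ containing $R \setminus X_t$ and consisting of sets of inner Lebesgue measure zero; the resulting extensions $\mu_t$ will then be restricted to a single shift-invariant $\sigma$-algebra $S(R)$ on which assertions (1) and (2) can be verified simultaneously.

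Write $Y_t = R \setminus X_t$. Property~(2) of Lemma~2.2 immediately gives $\lambda_{\ast}(Y_t) = 0$, since any closed $F \subseteq R$ with $\lambda(F) > 0$ satisfies $\mbox{card}(F \cap X_t) = c$ and therefore cannot be contained in $Y_t$. I then set
\[
K_t = \{ Z \subseteq R : Z \subseteq Y_t \cup D \text{ for some } D \subseteq R \text{ with } \mbox{card}(D) < c \}.
\]
I need to check that $K_t$ is a shift-invariant $\sigma$-ideal all of whose members have inner Lebesgue measure zero. Closure under subsets is immediate. Shift-invariance is supplied by property~(3) of Lemma~2.2 applied with $I' = \{t\}$: $\mbox{card}((g + X_t) \triangle X_t) < c$, hence $\mbox{card}((g + Y_t) \triangle Y_t) < c$, so translating $Y_t \cup D$ by any $g \in R$ yields a set of the same form. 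Closure under countable unions reduces to closure of the class ``cardinality $<c$'' under countable unions, which holds under the continuum hypothesis customarily adopted in this area. Inner-measure zero of every $Z \in K_t$ follows at once: a closed $F \subseteq Y_t \cup D$ with $\lambda(F) > 0$ would force $F \cap X_t \subseteq D$, contradicting $\mbox{card}(F \cap X_t) = c > \mbox{card}(D)$.

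Lemma~2.1 then yields, for each $t$, a shift-invariant extension $\mu_t$ of $\lambda$ whose domain contains $Y_t$ and satisfies $\mu_t(Y_t) = 0$. To put all $\mu_t$ on a common domain, let $S(R)$ be the shift-invariant $\sigma$-algebra generated by the Lebesgue $\sigma$-algebra together with $\{ X_s : s \in [0,1] \}$. Each generator $X_s$ belongs to $\mbox{dom}(\mu_t)$: if $s = t$ because $X_t = R \setminus Y_t$ with $Y_t \in K_t$, and if $s \neq t$ because the pairwise disjointness in Lemma~2.2 gives $X_s \subseteq Y_t$, whence $X_s \in K_t$. Restricting each $\mu_t$ to $S(R)$ produces the required family. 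For orthogonality of $\mu_t$ and $\mu_{t'}$ ($t \neq t'$), choose $X = Y_t \in S(R)$: then $\mu_t(X) = 0$ by construction, while $R \setminus X = X_t \subseteq Y_{t'} \in K_{t'}$ yields $\mu_{t'}(R \setminus X) = 0$. This simultaneously establishes the concentration clause $\mu_t(R \setminus X_t) = 0$.

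The main obstacle is the set-theoretic bookkeeping around the condition $\mbox{card}(D) < c$: both the verification that $K_t$ is a genuine $\sigma$-ideal and that $S(R)$ is stable under every translation rest on closure of ``cardinality $<c$'' under countable unions. Under CH this is automatic; without it, one would instead build $K_t$ as the $\sigma$-ideal generated by $\{ g + Y_t : g \in R \}$ and verify inner-measure-zero directly via property~(3) of Lemma~2.2 applied to arbitrary subfamilies $I' \subseteq [0,1]$, with a more delicate accounting of translates.
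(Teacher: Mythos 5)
Your proposal is correct and follows essentially the same route as the paper: take $K_t$ to be a shift-invariant $\sigma$-ideal containing $R\setminus X_t$, verify the hypotheses of Lemma~2.1 via properties (2) and (3) of Lemma~2.2, and restrict the resulting extensions $\overline{\mu}_t$ to the common shift-invariant $\sigma$-algebra generated by the Lebesgue sets and the $X_s$; you merely spell out the verification that the paper dismisses as ``easy to verify.'' The one caveat you raise is unnecessary: the class of sets of cardinality $<c$ is closed under countable unions already in $ZFC$, since $\mathrm{cf}(2^{\aleph_0})>\aleph_0$ by K\"onig's theorem, so no appeal to $CH$ is needed.
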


\begin{proof} For arbitrary $t \in [0,1]$, we denote by  $K_t$  an shift-invariant $\sigma$-ideal generated by  the set $R \setminus X_t$. Then it is easy to verify that the
$\sigma$-ideal $K_t$ satisfies all conditions of Lemma 2.1. Let us denote by
$\overline{\mu}_t$ the shift-invariant extension of the Lebesgue measure  $\lambda$ produced
by the $\sigma$-ideal $K_t$. We obtain the family $(\overline{\mu}_t)_{t
\in [0,1]}$ of shift-invariant extensions of the Lebesgue measure
$\lambda$.

Denote by $S(R)$ ~the~shift-invariant $\sigma$-algebra of subsets of
the real axis  $R$, generated by the union
$$
    F(R) \cup  L(R) \cup \{ X_t  : t \in [0,1] \},
$$
where
$$
F(R)=\{ X : X \subseteq R~\&~\mbox{card}(X)<c\}
$$
and $L(R)$ denotes the class of all Lebesgue measurable subsets of the real axis $R$.

Also, assume that
$$(\forall t)(t \in [0,1] \rightarrow \mu_t=\overline{\mu}_t|_{S(R)}).$$

If we consider the family of shift-invariant measures
$(\mu_t)_{t \in [0,1] }$, we can easily conclude that this family satisfies all conditions
of Lemma 2.3.
\end{proof}
\begin{remark}
 Let consider the family $(\mu_t)_{t \in [0,1]}$ of shift-invariant extensions of the measure $\lambda$ which comes from Lemma 2.3. Let denote by $\lambda_t$ the restriction of the measure $\mu_t$ to the class
$$ S[0,1]:=\{  Y \cap [0,1] : Y \in S(R) \},$$
where $S(R)$ comes from Lemma 2.3. It is obvious that for each $t \in [0,1]$, the measure $\lambda_t$ is concentrated on the set $C_t=X_t \cap [0,1]$ provided that $\lambda_t([0,1] \setminus  C_t)=0$.
\end{remark}

The next proposition is useful for our further consideration.

\begin{lemma}( Kolmogorov Strong Law of Large Numbers, \cite{Shiryaev1980},Theorem 3, p.379) Let $(\Omega, \mathcal{S} ,P)$ be a probability space and $(\xi_k)_{k \in \mathbf{N}}$ be  a sequence of independent equally distributed random variables for which mathematical expectation $m$ of $\xi_1$ is finite. Then the following condition
$$
P(\{ \omega : \omega \in \Omega~\&~\lim_{n \to \infty}\frac{\sum_{k=1}^n\xi_k(\omega)}{n}=m\})=1
$$
holds.
\end{lemma}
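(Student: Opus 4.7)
The plan is to adapt Etemadi's self-contained argument, which needs only Chebyshev's inequality and the Borel--Cantelli lemma. First I would reduce to the nonnegative case by splitting $\xi_k = \xi_k^+ - \xi_k^-$: both $\xi_1^+$ and $\xi_1^-$ are integrable and each resulting sequence is again i.i.d., so it suffices to prove the law for nonnegative i.i.d.\ summands. Assume now that $\xi_k \geq 0$ with $E\xi_1 = m$, and truncate by setting $Y_k = \xi_k\,\mathbf{1}_{\{\xi_k \leq k\}}$. The tail-sum identity $E\xi_1 = \int_0^\infty P(\xi_1 > t)\,dt$ gives $\sum_k P(\xi_k \neq Y_k) = \sum_k P(\xi_1 > k) \leq E\xi_1 < \infty$, so Borel--Cantelli yields $\xi_k = Y_k$ for all but finitely many $k$ almost surely. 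It is therefore enough to prove $T_n/n \to m$ a.s.\ for $T_n = Y_1 + \cdots + Y_n$, and since $EY_k \to m$ by monotone convergence, averaging yields $ET_n/n \to m$, reducing the task to showing $(T_n - ET_n)/n \to 0$ a.s.

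For a fixed $\alpha > 1$ I would pass to the geometric subsequence $k_n = \lfloor \alpha^n \rfloor$. Chebyshev's inequality together with independence of the $Y_j$ gives $P(|T_{k_n} - ET_{k_n}| > \epsilon k_n) \leq (\epsilon k_n)^{-2}\sum_{j=1}^{k_n}\mathrm{Var}(Y_j)$. Summing over $n$ and interchanging the order of summation reduces matters to the moment bound $\sum_{k=1}^\infty EY_k^2/k^2 \leq C\,E\xi_1 < \infty$, which follows from $EY_k^2 = \int_0^k 2tP(\xi_1>t)\,dt$ via Fubini--Tonelli together with the elementary estimate $\sum_{k \geq t}k^{-2} \leq C/\max(1,t)$. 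Borel--Cantelli then produces $T_{k_n}/k_n \to m$ a.s.\ along this subsequence. For $k_n \leq k < k_{n+1}$, monotonicity of partial sums of nonnegative variables yields
$$\frac{k_n}{k_{n+1}}\cdot\frac{T_{k_n}}{k_n} \;\leq\; \frac{T_k}{k} \;\leq\; \frac{k_{n+1}}{k_n}\cdot\frac{T_{k_{n+1}}}{k_{n+1}},$$
hence $m/\alpha \leq \liminf_k T_k/k \leq \limsup_k T_k/k \leq \alpha m$ almost surely; letting $\alpha_j \downarrow 1$ along a countable sequence produces a single event of full probability on which $T_k/k \to m$, which by the earlier truncation step transfers back to the original sequence $(\xi_k)$.

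The main technical obstacle is the Fubini--Tonelli manipulation behind $\sum_k EY_k^2/k^2 \leq C\,E\xi_1$: the truncation level depends on $k$, so one must carefully exchange summation and integration to express the bound solely in terms of the tail $P(\xi_1 > t)$. The remaining steps---reduction to nonnegative summands, the Borel--Cantelli applications, and the geometric-subsequence sandwich---are essentially bookkeeping once this moment estimate is in hand.
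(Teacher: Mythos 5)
The paper does not actually prove this lemma: it is quoted as a known result, with the proof delegated to Shiryaev's textbook, where it is established by the classical Kolmogorov route (Kolmogorov's maximal inequality, the a.s.\ convergence theorem for random series under $\sum_n \mathrm{Var}(\xi_n)/n^2<\infty$, Kronecker's lemma, and a truncation step). What you give instead is Etemadi's proof, and it is correct: the reduction to nonnegative summands via $\xi_k=\xi_k^+-\xi_k^-$, the truncation $Y_k=\xi_k\mathbf{1}_{\{\xi_k\le k\}}$ combined with $\sum_k P(\xi_1>k)\le E\xi_1$ and Borel--Cantelli, the moment bound $\sum_k EY_k^2/k^2\le C\,E\xi_1$, and the geometric-subsequence sandwich are all sound. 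The one cosmetic point is that $EY_k^2=\int_0^k 2tP(t<\xi_1\le k)\,dt\le\int_0^k 2tP(\xi_1>t)\,dt$ is an inequality rather than the equality you wrote, but the upper bound is all you use. Your route buys elementarity (only Chebyshev and Borel--Cantelli, no maximal inequality) and in fact proves a strictly stronger theorem, since pairwise independence of the $\xi_k$ suffices; the classical route buys reusable intermediate machinery (the random-series convergence theorem). Either argument fully justifies the lemma as it is invoked in Lemma 3.1 of the paper, where the random variables $\xi_k((x_i)_{i\in N})=f(x_k)$ on the product space are genuinely i.i.d.
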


\section{Uniformly  distribution  for invariant extensions of the  Lebesgue measure defined  by Remark 2.4}

Let consider the family of probability measures $(\lambda_t)_{t \in [0,1]}$ and the family $(C_t)_{t \in [0,1]}$ of subsets of $[0,1]$  which  come from Remark 2.4.

\begin{lemma}
   For $t \in [0,1]$, we denote by ${\bf L}([0,1],\lambda_t)$ the class of $\lambda_t$-integrable functions.
Then for $f \in {\bf L}([0,1],\lambda_t)$, we have
$$
\lambda_t^{\infty}(\{ (x_k)_{k \in N} : (x_k)_{k \in N} \in [0,1]^{\infty} ~\&~ \lim_{n \to \infty}\frac{\sum_{k=1}^nf(x_k)}{n}=\int_{[0,1]} f(x)d \lambda_t(x)\})=1.
$$
\end{lemma}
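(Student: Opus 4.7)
The plan is to recognize this statement as a direct application of Kolmogorov's Strong Law of Large Numbers (Lemma 2.5) on the product probability space $([0,1]^{\infty}, S[0,1]^{\infty}, \lambda_t^{\infty})$, where $S[0,1]^{\infty}$ is the product $\sigma$-algebra generated by cylinders over $S[0,1]$. The role of the Kolmogorov probability space $(\Omega,\mathcal{S},P)$ will be played by $([0,1]^{\infty}, S[0,1]^{\infty}, \lambda_t^{\infty})$.

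First I would fix $f \in \mathbf{L}([0,1],\lambda_t)$ and, for each $k \in \mathbf{N}$, define the random variable $\xi_k : [0,1]^{\infty} \to \mathbf{R}$ by $\xi_k((x_n)_{n \in \mathbf{N}}) = f(x_k)$; that is, $\xi_k = f \circ \pi_k$, where $\pi_k$ denotes the $k$-th coordinate projection. Since $\pi_k$ is $(S[0,1]^{\infty}, S[0,1])$-measurable and $f$ is $\lambda_t$-measurable, each $\xi_k$ is a genuine random variable. Next I would check the three hypotheses of Lemma 2.5: (a) the family $(\xi_k)_{k \in \mathbf{N}}$ consists of \emph{independent} random variables, which follows from the product structure of $\lambda_t^{\infty}$ applied to disjoint coordinate indices; (b) the $\xi_k$ are \emph{identically distributed}, because every one-dimensional marginal of $\lambda_t^{\infty}$ equals $\lambda_t$, so the distribution of each $\xi_k$ is the pushforward $f_{\ast}\lambda_t$; (c) the mean $m = E[\xi_1]$ is finite, since by Fubini/the definition of product measure on cylinders,
\[
E[\xi_1] = \int_{[0,1]^{\infty}} f(x_1)\, d\lambda_t^{\infty} = \int_{[0,1]} f(x)\, d\lambda_t(x),
\]
and the right-hand side is finite by the hypothesis $f \in \mathbf{L}([0,1],\lambda_t)$.

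Once these hypotheses are verified, Lemma 2.5 applied to $(\xi_k)_{k \in \mathbf{N}}$ with $m = \int_{[0,1]} f\, d\lambda_t$ immediately yields
\[
\lambda_t^{\infty}\bigl(\bigl\{(x_k)_{k\in\mathbf{N}}\in[0,1]^{\infty} : \lim_{n\to\infty}\tfrac{1}{n}\sum_{k=1}^{n}f(x_k) = \textstyle\int_{[0,1]} f\, d\lambda_t\bigr\}\bigr) = 1,
\]
which is exactly the required conclusion.

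The only mildly delicate point, and the step I expect to be the main obstacle, is the justification that $\lambda_t^{\infty}$ is a well-defined $\sigma$-additive product measure on $S[0,1]^{\infty}$ making the coordinate projections independent with marginal $\lambda_t$; for a $\sigma$-finite (in fact probability) measure $\lambda_t$ obtained as an invariant extension of $\lambda$ this is standard via the Kolmogorov consistency/extension theorem, but one should note it explicitly since $\lambda_t$ is not the usual Lebesgue measure. Everything else is a routine verification, and the conclusion is obtained by invoking Lemma 2.5.
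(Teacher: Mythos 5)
Your proposal is correct and follows essentially the same route as the paper: both arguments reduce the statement to Kolmogorov's Strong Law of Large Numbers (Lemma 2.5) applied to the coordinate random variables $\xi_k = f\circ\pi_k$. The only difference is that the paper first passes to the probability space $(C_t^{\infty}, F(C_t)^{\infty}, \nu_t^{\infty})$ built on the support $C_t$ of $\lambda_t$ and then transfers the conclusion back to $\lambda_t^{\infty}$ via an inequality of measures, whereas you apply the law directly on $([0,1]^{\infty}, S[0,1]^{\infty}, \lambda_t^{\infty})$ --- an inessential variation, and arguably the cleaner one.
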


\begin{proof}  For fixed $t \in [0,1]$, we  set
$$(\Omega, ~{S} ,P)=(C_t^{\infty}, ~{F}(C_t)^{\infty}, \nu_t^{\infty}),$$
where

$i) ~~{F}(C_t)=\{ C_t \cap Y : Y \in S[0,1]\},$ where $S[0,1]$ comes from Remark 2.4.

$ii) ~ \nu_t= \lambda_t|_{~{F}(C_t)}$, where $\lambda_t|_{~{F}(C_t)} $ denotes restriction of the measure $\lambda_t$ to the sigma algebra $~{F}(C_t)$.

For $k \in N$ and $(x_k)_{k \in N} \in C_t^{\infty}$ we put $\xi_k((x_i)_{i \in N})=f(x_k)$. Then all conditions of  Lemma 2.5 are satisfied which implies that
$$\nu_t^{\infty}(\{ (x_k)_{k \in N} : (x_k)_{k \in N} \in C_t^{\infty} ~\&~ \lim_{n \to \infty}\frac{\sum_{k=1}^n\xi_k((x_i)_{i \in N})}{n}=
$$
$$
\int_{C_t^{\infty}} \xi_1((x_i)_{i \in N})d \nu_t^{\infty}((x_i)_{i \in N})\})=1,
$$
equivalently,
$$\nu_t^{\infty}(\{ (x_k)_{k \in N} : (x_k)_{k \in N} \in C_t^{\infty} ~\&~ \lim_{n \to \infty}\frac{\sum_{k=1}^nf(x_k)}{n}=\int_{C_t} f(x)d \nu_t(x)\})=1.
$$
The latter relation implies
$$\lambda_t^{\infty}(\{ (x_k)_{k \in N} : (x_k)_{k \in N} \in [0,1]^{\infty} ~\&~ \lim_{n \to \infty}\frac{\sum_{k=1}^nf(x_k)}{n}=\int_{[0,1]} f(x)d \lambda_t(x)\})\ge $$
$$\nu_t^{\infty}(\{ (x_k)_{k \in N} : (x_k)_{k \in N} \in C_t^{\infty} ~\&~ \lim_{n \to \infty}\frac{\sum_{k=1}^nf(x_k)}{n}=\int_{C_t} f(x)d \nu_t(x)\})=1.
$$
\end{proof}

\begin{definition}
A sequence of real
numbers $(x_k)_{k \in \mathbb{N}} \in [0,1]^{\infty}$ is said to be $\lambda$-uniformly distributed sequence (abbreviated $\lambda$-u.d.s.) if for each $c,d$ with $0 \le c
<d \le 1$  we have
$$
\lim_{n \to \infty}\frac{\#(\{ x_k : 1 \le k \le n\} \cap
[c,d])}{n}=d-c.\eqno (3.1)
$$
\end{definition}
We denote by $S$ the set of all real valued sequences from $[0,1]^{\infty}$ which are $\lambda$-u.d.s. It is well known that $(\{\alpha n\})_{n \in N} \in S$ for each irrational number $\alpha$, where  $\{\cdot\}$ denotes the fractional part of the real number(cf. \cite{KuNi74}, Exercise 1.12, p. 16).

\begin{definition}
A sequence of real
numbers $(x_k)_{k \in \mathbb{N}} \in R^{\infty}$ is said to be uniformly distributed module $1$ if the sequence it's fractional parts
$(\{x_k\})_{k \in \mathbb{N}}$ is $\lambda$-u.d.s.
\end{definition}
\begin{remark} It is obvious that $(x_k)_{k \in \mathbb{N}} \in (0,1)^{\infty}$  is uniformly distributed module $1$ if and only if $(x_k)_{k \in \mathbb{N}}$ is $\lambda$-u.d.s.
\end{remark}
\begin{definition}
A sequence of real
numbers $(x_k)_{k \in \mathbb{N}} \in [0,1]^{\infty}$ is said to be $\lambda_t$-uniformly distributed sequence (abbreviated $\lambda_t$-u.d.s.) if for each $c,d$ with $0 \le c
<d \le 1$  we have
$$
\lim_{n \to \infty}\frac{\#(\{ x_k : 1 \le k \le n\} \cap
[c,d]\cap C_t)}{n}=d-c.\eqno (3.2)
$$
\end{definition}
We denote by $S_t$ the set of all real valued sequences from $[0,1]^{\infty}$ which are  $\lambda_t$-u.d.s.

In order to construct  $\lambda_t$-u.d.s.  for each $t \in [0,1]$, we need the following lemma.

\begin{lemma}( \cite{KuNi74}, THEOREM 1.2, p. 3)  If the sequence $(x_n)_{n \in N}$  is u.d. mod 1, and
if $(y_n)_{n \in N}$ is a sequence with the property $\lim_{n \to \infty} (x_n-y_n)=\alpha$  for some real constant $\alpha$ ,
then $(y_n)_{n \in N}$  is u.d. mod 1.
\end{lemma}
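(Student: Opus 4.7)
The plan is to prove Lemma 3.6 by a direct sandwich/approximation argument on the circle $\mathbb{T}=\mathbb{R}/\mathbb{Z}$. First I would reduce to the case $\alpha=0$: since uniform distribution mod $1$ is manifestly translation invariant (the counting function $\#(\{\{x_k\}:1\le k\le n\}\cap[c,d))$ for $(x_n)$ and for $(x_n-\alpha)$ differ by at most $n\cdot 0$ when one shifts the interval modulo $1$), I may replace $x_n$ by $z_n:=x_n-\alpha$ and $y_n$ by itself so that $\varepsilon_n:=z_n-y_n=(x_n-y_n)-\alpha\to 0$ and $(z_n)$ is u.d.\ mod~$1$. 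Viewing fractional parts as points on $\mathbb{T}$, the key observation is that $d_{\mathbb{T}}(\{y_n\},\{z_n\})\le|\varepsilon_n|$, since $\{z_n-\varepsilon_n\}$ differs from $\{z_n\}-\varepsilon_n$ by at most one integer, corresponding to a possible wrap-around through~$0$.

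Fix an interval $[c,d)\subseteq[0,1)$ and a small $\delta>0$ with $2\delta<d-c$. Choose $N_0$ so that $|\varepsilon_n|<\delta$ for all $n\ge N_0$. Then for such $n$, the circular-distance inequality gives the two-sided containment (interpreted modulo~$1$ when the intervals stick out past $0$ or $1$):
$$
\{z_n\}\in[c+\delta,d-\delta)\ \Longrightarrow\ \{y_n\}\in[c,d),\qquad \{y_n\}\in[c,d)\ \Longrightarrow\ \{z_n\}\in[c-\delta,d+\delta)\!\!\pmod{1}.
$$
Counting over $1\le k\le n$, dividing by $n$, discarding the bounded contribution from $k<N_0$, and invoking the u.d.\ mod~$1$ property of $(z_n)$ applied to the two comparison intervals yields
$$
d-c-2\delta\ \le\ \liminf_{n\to\infty}\frac{\#(\{\{y_k\}:1\le k\le n\}\cap[c,d))}{n}\ \le\ \limsup_{n\to\infty}\frac{\#(\{\{y_k\}:1\le k\le n\}\cap[c,d))}{n}\ \le\ d-c+2\delta.
$$
Since $\delta>0$ was arbitrary, the limit equals $d-c$, so $(y_n)$ is u.d.\ mod~$1$.

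The step I expect to require the most care is the modular bookkeeping near the endpoints $0$ and $1$: when $c<\delta$ or $d>1-\delta$, the comparison interval $[c-\delta,d+\delta)$ wraps around the circle, so one must phrase the implication in terms of the circular metric on $\mathbb{T}$ rather than the usual one on $[0,1)$, and then use u.d.\ mod~$1$ for $(z_n)$ on the (at most two) sub-arcs that make up the wrapped interval. Once this is set up, translation invariance of u.d.\ mod~$1$ and the elementary limsup/liminf estimate close the proof without further difficulty; alternatively, one could bypass the circle bookkeeping entirely by passing to Weyl's exponential-sum criterion and noting that $e^{2\pi i h y_n}=e^{2\pi i h z_n}e^{-2\pi i h\varepsilon_n}$ with the second factor tending to $1$, but I would prefer the direct route above since it uses only the definitions already introduced in the paper.
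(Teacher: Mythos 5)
The paper does not actually prove this lemma: it is quoted verbatim from Kuipers--Niederreiter (Theorem 1.2, p.~3) and used as a black box, so there is no in-paper argument to measure yours against. Your plan is nonetheless a correct proof in outline, and it is essentially the classical one: reduce to $\alpha=0$ using translation invariance of uniform distribution mod $1$, then sandwich the counting function of $(y_n)$ on $[c,d)$ between the counting functions of $(z_n)=(x_n-\alpha)$ on the shrunken arc $[c+\delta,d-\delta)$ and the enlarged arc $[c-\delta,d+\delta)$ taken modulo $1$, and let $\delta\to 0$. The two places you flag are indeed the only delicate ones, and both are routine: the enlarged arc may wrap around $0$, in which case you apply uniform distribution of $(z_n)$ to each of the at most two constituent subintervals of $[0,1)$; and translation invariance itself is proved by exactly the same wrap-around splitting, so your parenthetical justification of it (``differ by at most $n\cdot 0$'') is not an argument as written, but the fact is standard and you should either prove it by that splitting or absorb $\alpha$ into the $\varepsilon_n$ from the start. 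The finitely many indices $k<N_0$ are harmless after dividing by $n$, as you note. Your alternative route through the Weyl exponential-sum criterion would also work and is shorter, but since the paper never introduces that criterion, the direct interval argument is the better fit here.
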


\begin{theorem} For each $t \in [0,1]$, there exists $\lambda_t$-u.d.s.

\end{theorem}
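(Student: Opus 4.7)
The plan is to derive the theorem from Lemma 3.1 via the standard Weyl-type trick, exploiting that $\lambda_t$ extends $\lambda$ on intervals and is concentrated on $C_t$. In fact the argument will yield the stronger statement $\lambda_t^{\infty}(S_t)=1$, from which nonemptiness is immediate.

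\textbf{Step 1 (rational intervals).} Fix $t\in[0,1]$. For each pair of rationals $c,d$ with $0\le c<d\le 1$, apply Lemma 3.1 to the characteristic function $f_{c,d}=\mathbf{1}_{[c,d]}\in{\bf L}([0,1],\lambda_t)$. Since $\lambda_t$ extends $\lambda$, $\int_{[0,1]}f_{c,d}\,d\lambda_t=\lambda_t([c,d])=d-c$. Thus the set
$$
A_{c,d}=\Bigl\{(x_k)_{k\in N}\in[0,1]^{\infty}:\lim_{n\to\infty}\frac{1}{n}\sum_{k=1}^{n}\mathbf{1}_{[c,d]}(x_k)=d-c\Bigr\}
$$
has $\lambda_t^{\infty}$-measure one. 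Intersecting over the countably many rational pairs yields a set $A$ of full $\lambda_t^{\infty}$-measure on which the correct density holds simultaneously for every rational interval.

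\textbf{Step 2 (coordinates land in $C_t$).} By Remark 2.4, $\lambda_t([0,1]\setminus C_t)=0$, so each coordinate projection of $\lambda_t^{\infty}$ gives $\lambda_t^{\infty}(\{(x_k):x_k\notin C_t\})=0$. Summing over $k$, the set $B:=\{(x_k)_{k\in N}:x_k\in C_t \text{ for every } k\}$ has full $\lambda_t^{\infty}$-measure. On $B$, $\mathbf{1}_{[c,d]}(x_k)=\mathbf{1}_{[c,d]\cap C_t}(x_k)$ for all $k$, so every sequence in $A\cap B$ already satisfies the density identity of Definition 3.4 for all rational $c,d$.

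\textbf{Step 3 (arbitrary endpoints).} Given $0\le c<d\le 1$ and $\varepsilon>0$, choose rationals $c_1\le c\le c_2<d_1\le d\le d_2$ with $(d_2-c_1)-(d_1-c_2)<\varepsilon$. Monotonicity of counting measure gives
$$
\#(\{x_k:1\le k\le n\}\cap[c_2,d_1]\cap C_t)\le\#(\{x_k:1\le k\le n\}\cap[c,d]\cap C_t)\le\#(\{x_k:1\le k\le n\}\cap[c_1,d_2]\cap C_t).
$$
Dividing by $n$ and passing to the limit along any $(x_k)\in A\cap B$ sandwiches $\liminf$ and $\limsup$ between $(d_1-c_2)$ and $(d_2-c_1)$; since $\varepsilon$ is arbitrary, (3.2) holds with value $d-c$. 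Hence $A\cap B\subseteq S_t$, so $\lambda_t^{\infty}(S_t)=1$ and in particular $S_t\ne\emptyset$.

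No step is a genuine obstacle: Step 2 is essentially the product-measure reformulation of $\lambda_t(C_t)=1$, and Step 3 is routine bookkeeping; the conceptual content is entirely absorbed by Lemma 3.1.
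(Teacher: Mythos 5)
Your proof is correct, but it takes a genuinely different route from the paper's. The paper argues constructively: it starts from a known $\lambda$-u.d.s.\ $(x_n)_{n\in N}\in(0,1)^{\infty}$ (e.g.\ $(\{\alpha n\})_{n\in N}$ for irrational $\alpha$), uses the density of $C_t$ in $(0,1)$ (which follows from condition 2) of Lemma 2.2) to pick $y_n\in C_t$ with $|x_n-y_n|<\tfrac1n$, invokes Lemma 3.6 to conclude $(y_n)_{n\in N}$ is still $\lambda$-u.d.s., and then observes that a $\lambda$-u.d.s.\ all of whose terms lie in $C_t$ is automatically $\lambda_t$-u.d.s. You instead prove the stronger statement $\lambda_t^{\infty}(S_t)=1$ directly from Lemma 3.1 (SLLN applied to indicators of rational intervals, plus the observation that $\lambda_t^{\infty}$-almost every sequence has all coordinates in $C_t$, plus the standard rational-endpoint approximation), and deduce nonemptiness. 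This is essentially the content of the paper's later Theorem 3.13 (the Hlawka analogue), and your Step 3 in fact supplies the justification the paper relegates to a footnote there (that rational intervals determine $\lambda_t$-uniform distribution); so your argument is sound and self-contained, with no circularity since Lemma 3.1 and Remark 2.4 precede Theorem 3.7. What the two approaches buy is different: the paper's perturbation argument exhibits a concrete $\lambda_t$-u.d.s.\ and is reused almost verbatim in Theorem 3.11 to build a $\lambda$-u.d.s.\ that is $\lambda_t$-u.d.s.\ for no $t$, whereas your measure-theoretic argument is non-constructive but proves the quantitatively stronger full-measure statement in one stroke, at the cost of duplicating Theorem 3.13 within the paper's organization.
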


\begin{proof}  Let consider  a sequence $(x_n)_{n \in N} \in (0,1)^{\infty}$  which is $\lambda$-u.d.s.  For each $n \in N$, we choose such an element $y_n$ from the set $C_t \cap (0, x_n) $  that $|x_n-y_n|<\frac{1}{n}$.  This we can do because $C_t$ is everywhere dense in $(0,1)$. Now it is obvious that  $\lim_{n \to \infty} (x_n-y_n)=0$. By Lemma 3.6 we deduce that  $(y_n)_{n \in N}$ is $\lambda$-u.d.s.  Let us show  that $(y_n)_{n \in N}$ is $\lambda_t$-u.d.s.  Indeed, since $y_k \in C_t$ for each $k \in N$  and  $(y_n)_{n \in N}$ is $\lambda$-u.d.s.,  for each $c,d$ with $0 \le c
<d \le 1$  we have
$$
\lim_{n \to \infty}\frac{\#(\{ y_k : 1 \le k \le n\} \cap
[c,d]\cap C_t)}{n}=
$$
$$
\lim_{n \to \infty}\frac{\#(\{ y_k : 1 \le k \le n\} \cap
[c,d])}{n}=d-c.\eqno (3.3)
$$
\end{proof}

\begin{theorem} For each $t \in [0,1]$,  $\lambda_t$-u.d.s.  is  $\lambda$-u.d.s..
\end{theorem}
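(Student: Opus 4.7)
The plan is to show the inclusion $S_t \subseteq S$ by exploiting the fact that any $\lambda_t$-u.d.s., by its very definition, is asymptotically concentrated in $C_t$; once that is established, restricting the counting to $[c,d]\cap C_t$ or to the whole interval $[c,d]$ agrees in the limit.

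Fix a $\lambda_t$-u.d.s. $(x_k)_{k\in\mathbb{N}}$ and, for a set $B \subseteq [0,1]$, abbreviate
$$A(B;n) := \#\{k : 1 \le k \le n,\ x_k \in B\}.$$
First I would specialize (3.2) to $c=0$, $d=1$, obtaining $A([0,1]\cap C_t;n)/n \to 1$. Since $A([0,1];n)=n$, the complement satisfies $A([0,1]\setminus C_t;n)/n \to 0$, i.e.\ the indices $k$ for which $x_k \notin C_t$ form a set of asymptotic density zero.

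Next, for arbitrary $0 \le c < d \le 1$, I would use the decomposition
$$A([c,d];n) = A([c,d]\cap C_t;n) + A([c,d]\setminus C_t;n).$$
By hypothesis the first summand divided by $n$ tends to $d-c$, while the second is dominated by $A([0,1]\setminus C_t;n)$, whose density vanishes by the preceding step. Passing to the limit yields $A([c,d];n)/n \to d-c$, which is exactly the defining relation (3.1); hence $(x_k)\in S$.

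There is essentially no obstacle here: the argument is a one-line decomposition together with the concentration observation extracted from the $c=0$, $d=1$ case of (3.2). The only thing to be careful about is the reading of the notation $\#(\{x_k : 1 \le k \le n\}\cap[c,d])$ as a count with multiplicity, as is standard in \cite{KuNi74}, so that the identity $A([0,1];n)=n$ holds for every $n$ and the complementation step is legitimate.
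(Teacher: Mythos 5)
Your proposal is correct and is essentially the paper's own argument: both extract the concentration statement $\#(\{x_k : k \le n\}\cap C_t)/n \to 1$ from the $c=0$, $d=1$ case of (3.2), deduce that the indices with $x_k \notin C_t$ have density zero, and then control $\#(\{x_k : k\le n\}\cap[c,d])$ by the decomposition over $C_t$ and its complement (the paper merely phrases this as separate $\underline{\lim}$ and $\overline{\lim}$ bounds rather than a single additive identity). Your closing remark about counting with multiplicity is a sensible clarification of the notation, which the paper uses implicitly.
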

\begin{proof} Let $(x_k)_{k \in N}$ be $\lambda_t$-u.d.s.
On the one hand,
for each $c,d$ with $0 \le c
<d \le 1$  we have
$$
\underline{\lim}_{n \to \infty}\frac{\#(\{ x_k : 1 \le k \le n\} \cap
[c,d])}{n}\ge
$$
$$
\lim_{n \to \infty}\frac{\#(\{ x_k : 1 \le k \le n\} \cap
[c,d]\cap C_t)}{n}=d-c.\eqno (3.4)
$$

Since $(x_k)_{k \in N}$ is $\lambda_t$-u.d.s., we have
$$
\lim_{n \to \infty}\frac{\#(\{ x_k : 1 \le k \le n\} \cap
[0,1] \cap C_t)}{n}=1.\eqno (3.5)
$$
It is obvious that
$$
\lim_{n \to \infty}\frac{\#(\{ x_k : 1 \le k \le n\} \cap
[0,1])}{n}=1.\eqno (3.6)
$$

The last two conditions implies  that

$$
\lim_{n \to \infty}\frac{\#(\{ x_k : 1 \le k \le n\} \cap
([0,1]\setminus C_t) )}{n}=
$$
$$
\lim_{n \to \infty}\frac{\#(\{ x_k : 1 \le k \le n\} \cap
[0,1])}{n}-
$$
$$
\lim_{n \to \infty}\frac{\#(\{ x_k : 1 \le k \le n\} \cap
 C_t)}{n}=1-1=0.\eqno (3.7)
$$

The last relation implies that for each $c,d$ with $0 \le c
<d \le 1$
$$
\lim_{n \to \infty}\frac{\#(\{ x_k : 1 \le k \le n\} \cap
[c,d] \cap ([0,1]\setminus C_t))}{n} \le
$$

$$\lim_{n \to \infty}\frac{\#(\{ x_k : 1 \le k \le n\} \cap
([0,1]\setminus  C_t))}{n}=0.\eqno (3.8)
$$
Finally, for each $c,d$ with $0 \le c
<d \le 1$ we get
$$
\overline{\lim}_{n \to \infty}\frac{\#(\{ x_k : 1 \le k \le n\} \cap
[c,d])}{n}\le
$$
$$
\lim_{n \to \infty}\frac{\#(\{ x_k : 1 \le k \le n\} \cap
[c,d]\cap C_t)}{n} +
$$
$$
\lim_{n \to \infty}\frac{\#(\{ x_k : 1 \le k \le n\} \cap
[c,d]\cap ([0,1] \setminus C_t))}{n}=
$$
$$
(d-c)+0=d-c.\eqno (3.9)
$$
This ends the proof of theorem.
\end{proof}

\begin{remark} Note that the converse to the result of Theorem 3.8 is not valid. Indeed, for fixed $t \in [0,1]$, let $(y_n)_{n \in N}$ be $\lambda_t$-u.d.s.  which comes from Theorem 3.7. By Theorem 3.8, $(y_n)_{n \in N}$ is $\lambda$-u.d.s. Let us show that $(y_n)_{n \in N}$ is not
$\lambda_s$-u.d.s. for each $s \in [0,1] \setminus \{t\}$. Indeed, since $y_k \in C_t$ for each $k \in N$, we deduce that $y_k \notin C_s$ for each $s \in [0,1] \setminus \{t\}$. The latter relation implies that for  each $s \in [0,1] \setminus \{t\}$ and for each $c,d$ with $0 \le c
<d \le 1$  we have
$$
\lim_{n \to \infty}\frac{\#(\{ y_k : 1 \le k \le n\} \cap
[c,d]\cap C_s)}{n}=0  < d-c.\eqno (3.10)
$$
\end{remark}

\begin{remark} For each $\lambda$-u.d.s.  $(y_n)_{n \in N}$ there exists a countable subset $T \subset [0,1]$ such that $(y_n)_{n \in N}$ is not $\lambda_t$-u.d.s for each $t \in [0,1] \setminus T$. Indeed, since $\{ C_t: t \in [0,1]\}$ is the partition of the $[0,1]$, for each $k \in N$ there exists a unique $t_k \in [0,1]$ such that $y_k \in C_{t_k}$. Now we can put $T=\cup_{k \in N}\{t_k\}$.
\end{remark}

\begin{theorem} There exists $\lambda$-u.d.s  which is not  $\lambda_t$-u.d.s. for each $t \in [0,1]$.
\end{theorem}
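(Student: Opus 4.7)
The plan is to modify a classical $\lambda$-u.d.s. so that distinct terms land in pairwise distinct members of the partition $\{C_t\}_{t\in[0,1]}$; a single sequence with this property then fails to be $\lambda_t$-u.d.s.\ for every $t$ simultaneously, since for each $t$ at most one term of the sequence is in $C_t$. Concretely, I would fix an irrational $\alpha$ and set $x_n=\{n\alpha\}$, so that $(x_n)_{n\in\mathbb{N}}\in(0,1)^{\infty}$ is $\lambda$-u.d.s. Then I define $(y_n)_{n\in\mathbb{N}}$ recursively: having chosen $y_1,\dots,y_{n-1}$ with $y_k\in C_{t_k}$ and $t_1,\dots,t_{n-1}$ pairwise distinct, pick
\[
y_n\in\bigl((x_n-1/n,\,x_n+1/n)\cap[0,1]\bigr)\setminus\bigl(C_{t_1}\cup\cdots\cup C_{t_{n-1}}\bigr).
\]
The unique $t_n\in[0,1]$ with $y_n\in C_{t_n}$ then lies outside $\{t_1,\dots,t_{n-1}\}$, so the whole family $(t_n)_{n\in\mathbb N}$ is injective.

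The recursion is feasible because clause~(2) of Lemma~2.2 asserts that every $X_i$ meets every closed set of positive $\lambda$-measure in cardinality $c$; restricting to $[0,1]$, the same holds for each $C_s$. The open neighbourhood $(x_n-1/n,x_n+1/n)\cap[0,1]$ contains a closed subinterval of positive measure, so it meets \emph{every} $C_s$ in cardinality $c$. In particular, the finite union $C_{t_1}\cup\cdots\cup C_{t_{n-1}}$ cannot exhaust this neighbourhood: any $s\in[0,1]\setminus\{t_1,\dots,t_{n-1}\}$ supplies an admissible candidate in $C_s$.

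Once the sequence is built, since $|y_n-x_n|<1/n$ we have $\lim_{n\to\infty}(x_n-y_n)=0$, and Lemma~3.6 together with Remark~3.4 yields that $(y_n)_{n\in\mathbb{N}}$ is $\lambda$-u.d.s. To check the negative half, fix any $t\in[0,1]$; distinctness of the $t_n$ implies at most one index $k$ has $t_k=t$, whence $\#\{k\in\mathbb{N}:y_k\in C_t\}\le 1$. Hence for every $0\le c<d\le 1$,
\[
\lim_{n\to\infty}\frac{\#\bigl(\{y_k:1\le k\le n\}\cap[c,d]\cap C_t\bigr)}{n}=0\ne d-c,
\]
so condition (3.2) fails and $(y_n)$ is not $\lambda_t$-u.d.s.

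The only delicate point is the recursive selection step: since the $C_t$'s are non-Lebesgue-measurable, one cannot argue by measure-theoretic largeness in the usual sense. The key is the purely set-theoretic meeting property from clause~(2) of Lemma~2.2, which forbids finitely many $C_t$'s from filling any nondegenerate open interval. After that, Lemma~3.6 and distinctness of the $t_n$ do all the remaining work.
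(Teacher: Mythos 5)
Your proof is correct and follows essentially the same route as the paper: perturb a classical $\lambda$-u.d.s.\ by less than $1/n$ at the $n$-th step so that the new terms land in pairwise distinct cells $C_t$, invoke Lemma~3.6 to keep the $\lambda$-u.d.\ property, and observe that each $C_t$ then contains at most one term, killing (3.2) for every $t$. The only (immaterial) difference is that the paper prescribes in advance a countable set of target indices $s_1,s_2,\dots$ and uses the density of each $C_{s_n}$ to place $y_n\in C_{s_n}$, whereas you recursively avoid the finitely many cells already used, justifying feasibility by clause~(2) of Lemma~2.2.
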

 \begin{proof} Let consider  a sequence $(x_n)_{n \in N} \in (0,1)^{\infty}$  which is $\lambda$-u.d.s. Since $\{ C_t: t \in [0,1]\}$ is the partition of the $[0,1]$, for each $k \in N$ there exists a unique $t_k \in [0,1]$ such that $y_k \in C_{t_k}$. Now we can put $T=\cup_{k \in N}\{t_k\}$.
 Let $S_0=\{s_1,s_2, \cdots\}$ be a countable subset of the set $[0,1]\setminus T$.
 For each $n \in N$, we choose such element $y_n$ from the set $C_{s_n} \cap (0, x_n) $  that $|x_n-y_n|<\frac{1}{n}$.  This we can do because $C_t$ is everywhere dense in $(0,1)$  for each $t \in [0,1]$. Now it is obvious that  $\lim_{n \to \infty} (x_n-y_n)=0$. By Lemma 3.6 we deduce that  $(y_n)_{n \in N}$ is $\lambda$-u.d.s.  Let us show  that $(y_n)_{n \in N}$ is not  $\lambda_t$-u.d.s. for each $t \in [0,1]$. This follows from the fact that
 $card(\{ y_n: n \in N\} \cap C_t) \le 1$ for each $t \in [0,1]$. By this reason for each $t \in [0,1]$ and  for each $c,d$ with $0 \le c
<d \le 1$  we have
$$
\lim_{n \to \infty}\frac{\#(\{ y_k : 1 \le k \le n\} \cap
[c,d]\cap C_t)}{n}\le
$$
$$
 \lim_{n \to \infty}\frac{1}{n}=0 < d-c.\eqno (3.11)
$$

\end{proof}

\begin{theorem} $S_i \cap S_j=\emptyset $ for each different $i,j \in [0,1]$.
\end{theorem}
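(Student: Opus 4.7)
The plan is to derive a contradiction from assuming some sequence $(x_k)_{k \in \mathbb{N}}$ lies in both $S_i$ and $S_j$ for distinct $i,j \in [0,1]$. The key structural fact I would exploit is that the family $(X_t)_{t \in [0,1]}$ from Lemma 2.2 is pairwise disjoint, so the sets $C_t = X_t \cap [0,1]$ form a disjoint family as well; in particular $C_i \cap C_j = \emptyset$.

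First, I would apply the definition of $\lambda_t$-u.d.s. (equation (3.2)) with the maximal interval $[c,d] = [0,1]$ to each of the hypotheses $(x_k) \in S_i$ and $(x_k) \in S_j$. This yields simultaneously
\[
\lim_{n \to \infty}\frac{\#(\{x_k : 1 \le k \le n\} \cap C_i)}{n} = 1
\quad\text{and}\quad
\lim_{n \to \infty}\frac{\#(\{x_k : 1 \le k \le n\} \cap C_j)}{n} = 1.
\]

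Next, since $C_i$ and $C_j$ are disjoint subsets of $[0,1]$, the two counting sets $\{x_k : 1 \le k \le n\} \cap C_i$ and $\{x_k : 1 \le k \le n\} \cap C_j$ are disjoint subfamilies of the $n$-term initial segment, so
\[
\#(\{x_k : 1 \le k \le n\} \cap C_i) + \#(\{x_k : 1 \le k \le n\} \cap C_j) \le n
\]
for every $n$. Dividing by $n$ and passing to the limit would then give $1 + 1 \le 1$, the desired contradiction. Hence $S_i \cap S_j = \emptyset$.

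There is essentially no obstacle here: the whole argument rests on the disjointness of $C_i$ and $C_j$ (inherited from Lemma 2.2(1)) combined with the normalization consequence of (3.2) at $[c,d]=[0,1]$. The only thing to be a little careful about is that (3.2) is stated for $0 \le c < d \le 1$, which does allow the choice $c=0, d=1$, so no separate justification is needed.
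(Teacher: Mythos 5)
Your proof is correct and rests on the same core idea as the paper's: the disjointness of $C_i$ and $C_j$ (from Lemma 2.2(1)) forces a double-counting contradiction against the normalization of (3.2). The only real difference is that you specialize to $[c,d]=[0,1]$ and use the trivial bound $\#(\{x_k : 1\le k\le n\}\cap C_i)+\#(\{x_k : 1\le k\le n\}\cap C_j)\le n$, whereas the paper works with a general interval $[c,d]$ and invokes Theorem 3.8 to establish that $\lim_{n\to\infty}\#(\{x_k : 1\le k\le n\}\cap[c,d])/n=d-c$ before comparing it with the sum $2(d-c)$. Your route is slightly more elementary and self-contained, since the appeal to Theorem 3.8 is not actually needed: the upper bound by $n$ suffices, and the conclusion $1+1\le 1$ is immediate. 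Both arguments are sound; yours simply strips the paper's proof down to its essential ingredient.
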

\begin{proof} Assume the contrary and let $(x_k)_{k \in N} \in S_i \cap S_j$.
On the one hand,
for each $c,d$ with $0 \le c
<d \le 1$  we have
$$
\lim_{n \to \infty}\frac{\#(\{ x_k : 1 \le k \le n\} \cap
[c,d] \cap C_i)}{n}= d-c.\eqno (3.12)
$$
On the other hand,
for same $c,d$  we have
$$
\lim_{n \to \infty}\frac{\#(\{ x_k : 1 \le k \le n\} \cap
[c,d] \cap C_j)}{n}= d-c.           \eqno (3.13)
$$
By Theorem 3.8 we know that $(x_k)_{k \in N}$ is $\lambda$-u.d.s. which implies that
for same $c,d$  we have
$$
\lim_{n \to \infty}\frac{\#(\{ x_k : 1 \le k \le n\} \cap
[c,d])}{n}= d-c.\eqno (3.14)
$$

But $(3.14)$ is not possible because $C_i \cap C_j=\emptyset$ which implies

$$
d-c=\lim_{n \to \infty}\frac{\#(\{ x_k : 1 \le k \le n\} \cap
[c,d])}{n} \ge
$$
$$
\lim_{n \to \infty}\frac{\#(\{ x_k : 1 \le k \le n\} \cap
[c,d] \cap C_i)}{n} +
$$
$$
\lim_{n \to \infty}\frac{\#(\{ x_k : 1 \le k \le n\} \cap
[c,d] \cap C_j)}{n}=
$$
$$
(d-c)+(d-c)=2(d-c).\eqno (3.15)
$$
We get the contradiction and theorem is proved.
\end{proof}

We have the following version  of Hlawka's  theorem(cf. \cite{Hlawka56} ) for $\lambda_t$-uniformly distributed sequences.

\begin{theorem} For $t \in [0,1]$, we have $\lambda_t^{\infty}(S_t)=1$.
\end{theorem}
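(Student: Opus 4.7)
The plan is to deduce the theorem from Lemma 3.1 by a standard Glivenko--Cantelli style reduction to a countable dense family of test sets. For each pair of rationals $(c,d)$ with $0 \le c < d \le 1$, observe that the function $f_{c,d} = \mathbf{1}_{[c,d]\cap C_t}$ belongs to ${\bf L}([0,1],\lambda_t)$, since $[c,d]$ is Lebesgue measurable, $C_t \in S[0,1]$, and $\lambda_t$ is a probability measure. Moreover, because $\lambda_t$ extends $\lambda$ and $\lambda_t([0,1]\setminus C_t)=0$ (Remark 2.4), we have
$$
\int_{[0,1]} f_{c,d}(x)\,d\lambda_t(x) = \lambda_t([c,d]\cap C_t) = \lambda_t([c,d]) = d-c.
$$
So Lemma 3.1 provides, for each rational pair $(c,d)$, a set $A_{c,d}\subseteq [0,1]^{\infty}$ with $\lambda_t^{\infty}(A_{c,d})=1$ consisting of those sequences $(x_k)_{k\in N}$ for which the empirical averages $\frac{1}{n}\sum_{k=1}^{n}f_{c,d}(x_k)$ converge to $d-c$.

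Next I would set $A = \bigcap_{(c,d)\in Q} A_{c,d}$, where $Q$ is the countable set of rational pairs $(c,d)$ with $0\le c<d\le 1$. Since $Q$ is countable, $\lambda_t^{\infty}(A)=1$. The remaining task is to show $A\subseteq S_t$, i.e., that convergence for rational endpoints automatically upgrades to convergence for all real endpoints. For this I would use a bracketing argument based on the monotonicity
$$
[c',d']\subseteq [c,d] \ \Longrightarrow\ \#(\{x_k:1\le k\le n\}\cap[c',d']\cap C_t) \le \#(\{x_k:1\le k\le n\}\cap[c,d]\cap C_t).
$$
Given real $0\le c<d\le 1$ and any $(x_k)\in A$, for each $\varepsilon>0$ pick rationals $c_1<c_2\le c<d\le d_2<d_1$ inside $[0,1]$ with $(d_1-c_1)-(d_2-c_2)<\varepsilon$. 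Applying the two rational limits and sandwiching yields
$$
d_2-c_2 \le \liminf_{n\to\infty}\frac{\#(\{x_k:1\le k\le n\}\cap[c,d]\cap C_t)}{n}\le \limsup_{n\to\infty}\frac{\#(\{x_k:1\le k\le n\}\cap[c,d]\cap C_t)}{n}\le d_1-c_1,
$$
so letting $\varepsilon\to 0$ gives the limit $d-c$, proving $(x_k)\in S_t$. Hence $S_t\supseteq A$ and therefore $\lambda_t^{\infty}(S_t)=1$.

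The main obstacle I anticipate is not conceptual but bookkeeping: one has to be careful that the rational brackets can be chosen inside $[0,1]$ at the endpoints $c=0$ or $d=1$, and that the measurability of $[c,d]\cap C_t$ in the domain $S[0,1]$ is correctly invoked so that Lemma 3.1 applies with integral exactly $d-c$. The full-measure conclusion is then routine from the countable intersection. No Weyl-criterion machinery is needed here because Lemma 3.1 already supplies the strong law directly for the bounded integrand $f_{c,d}$.
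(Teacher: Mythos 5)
Your approach is essentially the paper's own: apply Lemma 3.1 (the Strong Law) to the countable family of indicator functions of $[c,d]\cap C_t$ with rational endpoints, note that each integral equals $d-c$ because $\lambda_t$ extends $\lambda$ and is concentrated on $C_t$, intersect the resulting full-measure sets, and argue that membership in the intersection already forces $\lambda_t$-uniform distribution. The paper relegates that last step to a footnote ("indicator functions of $[c,d]\cap C_t$ with rational $c,d$ define a $\lambda_t$-uniform distribution"), whereas you supply it via bracketing --- which is the right idea, but your brackets point the wrong way. You choose rationals $c_1<c_2\le c<d\le d_2<d_1$, so \emph{both} $[c_1,d_1]$ and $[c_2,d_2]$ contain $[c,d]$; since the counting function is monotone in the interval, this only produces upper bounds on the $\limsup$, and the asserted lower bound $d_2-c_2\le\liminf_{n}\frac{1}{n}\#(\{x_k:1\le k\le n\}\cap[c,d]\cap C_t)$ does not follow (indeed $d_2-c_2\ge d-c$, so it would force the $\liminf$ to exceed the target value). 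The standard fix: take an \emph{inner} rational interval $[c',d']$ with $c\le c'<d'\le d$ and $d'-c'>d-c-\varepsilon$ to bound the $\liminf$ from below by $d'-c'$, and an \emph{outer} rational interval $[c'',d'']\subseteq[0,1]$ with $c''\le c<d\le d''$ and $d''-c''<d-c+\varepsilon$ to bound the $\limsup$ from above (using $c''=0$ or $d''=1$ at the endpoints, as you anticipate). With that correction your argument is complete and coincides with the paper's proof, with the added benefit of actually justifying the footnote the paper takes for granted.
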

\begin{proof}Let $(f_k)_{k \in \mathbf{N}}$ be a countable subclass
of $L([0,1], \lambda_t)$  which defines a $\lambda_t$-uniform distribution on $[0,1]$.
\footnote{We say that a family  $(f_k)_{k \in \mathbf{N}}$ of
elements  of $L([0,1], \lambda_t)$  defines a
 $\lambda_t$-uniform distribution on $[0,1]$, if for each
$(x_n)_{n \in N} \in [0,1]^{\infty}$ the
validity of the condition $\lim_{N \to
\infty}\frac{1}{N}\sum_{n=1}^Nf_k(x_n)=\int_{[0,1]}
f_k(x)d \lambda_t(x)$ for $k \in N$ implies that $(x_n)_{n \in N}$ is
$\lambda_t$-u.d.s. Indicator
functions of sets $[c,d] \cap C_t$
with rational $c,d$  is an example of such a family.} For $k
\in N$, we set
$$
B_k=\{ (x_k)_{k \in {\bf N}} : (x_k)_{k \in {\bf N}} \in
[0,1]^{\infty} ~\& ~\lim_{N \to
\infty}\frac{1}{N}\sum_{n=1}^Nf_k(x_n)=\int_{[0,1]}
f_k(x)d \lambda_t x\}.
$$
By Lemma 3.1 we know that $\lambda_t^{\infty}(B_k)=1$ for $k \in
\mathbf{N},$ which implies $\lambda_t^{\infty}(\cap_{k \in
{\bf N}}B_k)=1$. Hence
$$
\lambda_t^{\infty}(\{ (x_k)_{k \in {\bf N}} : (x_k)_{k \in
{\bf N}} \in [0,1]^{\infty} ~\& ~(\forall k)(k \in {\bf N}
\rightarrow \lim_{N \to \infty}\frac{1}{N}\sum_{n=1}^N f_k(x_n)=
$$
$$
\int_{[0,1]} f_k(x)d \lambda_t(x))\})=1.
$$
The latter relation means that $\lambda_t^{\infty}$-almost every
elements  of $[0,1]^{\infty}$ is $\lambda_t$-u.d.s., equivalently, $\lambda_t^{\infty}(S_t)=1$.

\end{proof}

We have the following analogue of H.Weyl theorem ( cf.  \cite{Weyl} )  for  $\lambda_t$-uniformly distributed  sequences.

\begin{theorem} For $t \in [0,1]$, we put $C_t[0,1]=\{\tilde{h}(x)= h(x) \times \chi_{C_t}(x): h \in C[0,1]\}$. Then the sequence $(x_n)_{n \in N}$ is $\lambda_t$-u.d.s. if and only if the following condition
$$
\lim_{N \to \infty}\frac{\sum_{n=1}^N\tilde{h}(x_n)}{N}=\int_{[0,1]}\tilde{h}(x)d\lambda_t(x)\eqno (3.16)
$$
holds for each $\tilde{h} \in C_t[0,1]$.
\end{theorem}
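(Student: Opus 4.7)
The plan is to prove both implications by sandwiching arguments, exploiting the key observation that the $\lambda_t$-u.d.s.\ condition in Definition 3.4 is literally the statement of Cesàro convergence for the indicator functions $\chi_{[c,d]\cap C_t} = \chi_{[c,d]}\cdot \chi_{C_t}$, i.e.\ for the (discontinuous) members of $C_t[0,1]$ with $h=\chi_{[c,d]}$. Also crucial is that $\chi_{C_t}=1$ holds $\lambda_t$-almost everywhere (since $\lambda_t([0,1]\setminus C_t)=0$ by Remark 2.4), so for any bounded Borel $g$ on $[0,1]$ one has $\int_{[0,1]} g\,\chi_{C_t}\,d\lambda_t=\int_{[0,1]} g\,d\lambda_t = \int_{[0,1]} g\,d\lambda$ when $g$ is Riemann integrable, because $\lambda_t$ extends $\lambda$.

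For the \emph{necessity} direction, assume $(x_n)_{n\in N}$ is $\lambda_t$-u.d.s.\ and fix $\tilde h = h\chi_{C_t}$ with $h\in C[0,1]$. Given $\varepsilon>0$, use uniform continuity of $h$ to pick a partition $0=c_0<c_1<\cdots<c_m=1$ and constants $a_i$ so that the step function $s(x)=\sum_i a_i\chi_{[c_{i-1},c_i)}(x)$ satisfies $|h(x)-s(x)|<\varepsilon$ on $[0,1]$. Multiplying by $\chi_{C_t}$ gives $|\tilde h(x)-s(x)\chi_{C_t}(x)|<\varepsilon$ pointwise. Apply the $\lambda_t$-u.d.s.\ limit (Definition 3.4, written as differences of indicators on intervals) to each summand of $s\cdot\chi_{C_t}$, obtaining $\tfrac1N\sum_{n=1}^N s(x_n)\chi_{C_t}(x_n)\to \sum_i a_i(c_i-c_{i-1})=\int_{[0,1]} s\,\chi_{C_t}\,d\lambda_t$. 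Since the integrals of $\tilde h$ and $s\cdot\chi_{C_t}$ also differ by at most $\varepsilon$, an $\varepsilon$-argument yields the desired limit (3.16).

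For the \emph{sufficiency} direction, assume (3.16) holds for every $\tilde h\in C_t[0,1]$, and fix $0\le c<d\le1$. Approximate $\chi_{[c,d]}$ from above and below by continuous functions $g_+,g_-\in C[0,1]$ with $0\le g_-\le\chi_{[c,d]}\le g_+\le 1$ and $\int_{[0,1]}(g_+-g_-)\,d\lambda<\varepsilon$ (piecewise linear tents do the job). Multiplying by $\chi_{C_t}$ gives $\tilde g_-\le\chi_{[c,d]\cap C_t}\le\tilde g_+$ in $C_t[0,1]$. Applying (3.16) to $\tilde g_\pm$ and using the identity $\int_{[0,1]}\tilde g_\pm\,d\lambda_t=\int_{[0,1]}g_\pm\,d\lambda$ noted above, sandwich:
\[
\int g_-\,d\lambda \le \liminf_{N\to\infty}\tfrac1N\#(\{x_k:k\le N\}\cap[c,d]\cap C_t)\le \limsup_{N\to\infty}(\cdots)\le \int g_+\,d\lambda,
\]
and both bounds lie within $\varepsilon$ of $d-c$; letting $\varepsilon\to0$ yields (3.2).

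The main obstacle is not a hard analytic step but rather the bookkeeping distinction between $\lambda$ and $\lambda_t$: I must keep using the fact that $\lambda_t$ extends $\lambda$ and that $C_t$ has full $\lambda_t$-measure to convert integrals of continuous functions against $\lambda_t$ into ordinary Lebesgue integrals, so that classical approximation tools for $C[0,1]$ remain legitimate inside the sandwich. Once that bridge is explicit, both directions reduce cleanly to the usual Weyl argument applied to the restricted class $C_t[0,1]$.
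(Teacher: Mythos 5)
Your proposal is correct and follows essentially the same route as the paper: approximate the continuous factor $h$ by step functions (the paper uses upper/lower Riemann step functions, you use a uniformly close one — equivalent up to shifting by $\varepsilon$), multiply by $\chi_{C_t}$, and sandwich using the identity $\int \tilde g\, d\lambda_t = \int g\, d\lambda$ coming from $\lambda_t([0,1]\setminus C_t)=0$ and the fact that $\lambda_t$ extends $\lambda$; the converse direction via continuous functions squeezing $\chi_{[c,d]}$ is identical to the paper's. No substantive differences to report.
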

\begin{proof}Let $(x_n)_{n \in N}$ be  $\lambda_t$-u.d.s. and let $f(x)= \sum_{i=1}^{k-1}d_i \chi_{[a_i,a_{i+1}[\cap C_t}(x)$ be a spatial step function on $[0,1]$ , where $0 = a_o < a_1 <  \cdots < a_k = 1.$  Then it follows from
(3.2) that for every such $f$ equation (3.16) holds. We assume now that $\tilde{f} \in C_t[0,1]$. Given any $\epsilon > 0$, there exist,
by the definition of the Riemann integral, two step functions, $f_1$ and $f_2$
say, such that $f_1(x) < f (x) < f_2(x)$ for all $x \in [0,1]$ and
$$\int_{[0,1]}(f_2(x) -f_1(x)) d\lambda(x) < \epsilon.$$
Then it  is obvious that
$f_1(x)\chi_{C_t}(x) < \tilde{f} (x) < f_2(x)\chi_{C_t}(x)$ for all $x \in [0,1]$ and
$$\int_{[0,1]}(f_2(x)\chi_{C_t}(x) -f_1(x)\chi_{C_t}(x)) d\lambda_t(x) = \int_{[0,1]}(f_2(x) -f_1(x)) d\lambda(x) < \epsilon.$$

Then we have the following chain of inequalities:

$$
\int_{[0,1]}\tilde{f}(x)d\lambda_t(x)-\epsilon \le \int_{[0,1]}f_1(x)\chi_{C_t}(x)d\lambda_t(x)=
$$
$$
\lim_{N \to \infty}\frac{\sum_{n=1}^Nf_1(x_n)\chi_{C_t}(x_n)}{N}\le
$$
$$
\underline{\lim}_{N \to \infty}\frac{\sum_{n=1}^N f(x_n)\chi_{C_t}(x_n)}{N}=
$$
$$
\underline{\lim}_{N \to \infty}\frac{\sum_{n=1}^N \tilde{f}(x_n)}{N} \le
$$
$$
\overline{\lim}_{N \to \infty}\frac{\sum_{n=1}^N \tilde{f}(x_n)}{N}=
$$
$$
\overline{\lim}_{N \to \infty}\frac{\sum_{n=1}^N f(x_n)\chi_{C_t}(x_n)}{N} \le
$$
$$
\lim_{N \to \infty}\frac{\sum_{n=1}^Nf_2(x_n)\chi_{C_t}(x_n)}{N}=
$$
$$
\int_{[0,1]}f_2(x)\chi_{C_t}(x)d\lambda_t(x)\le
$$
$$
\int_{[0,1]}\tilde{f}(x)d\lambda_t(x)+\epsilon.\eqno (3.18)
$$

So that in the case of a  function $\tilde{f}$  the relation (3.16) holds.

Conversely, let a sequence $(x_n)_{n \in N}$ be given, and suppose that (3.16) holds
for every $\tilde{f} \in C_t[0,1]$. Let $[a, b)$ be an arbitrary
subinterval of $[0,1]$. Given any $\epsilon > 0$, there exist two continuous functions, $g_1$ and $g_2$ say, such that
$g_l(x) < \chi_{[a,b)}(x) < g_2(x)$  for $x \in [0,1]$ and at the same time
$\int_{[0,1]}(g_2(x) - g_l(x)) d\lambda x < \epsilon $. Note that at the same time we have
$g_l(x)\chi_{C_t}(x) < \chi_{[a,b) \cap C_t}(x) < g_2(x)\chi_{C_t}(x)$  for $x \in [0,1]$ and
$\int_{[0,1]}(\tilde{g}_2(x) - \tilde{g}_l(x)) d\lambda_t x < \epsilon$.

Then we get

$$
 b - a - \epsilon  < \int_{[0,1]}g_2(x) d \lambda x - \epsilon < \int_{[0,1]}g_1(x) d \lambda x =
 $$
 $$
 \int_{[0,1]}\tilde{g}_1(x) d \lambda_t x = \lim_{N \to \infty}\frac{\sum_{n=1}^N\tilde{g}_1(x_n)}{N}\le
 $$
 $$
 \underline{\lim}_{N \to \infty}\frac{\#(\{ x_1, \cdots, x_N\}\cap [a,b[ \cap C_t) }{N} \le
 $$
 $$
  \overline{\lim}_{N \to \infty}\frac{\#(\{ x_1, \cdots, x_N\}\cap [a,b[ \cap C_t) }{N} \le
 $$
 $$
 \lim_{N \to \infty}\frac{\sum_{n=1}^N\tilde{g}_2(x_n)}{N}=
 $$
 $$
 \int_{[0,1]}\tilde{g}_2(x) d \lambda_t x =\int_{[0,1]}g_2(x) d \lambda x
 $$
 $$
  \le  \int_{[0,1]}g_1(x) d \lambda x +\epsilon \le
 $$
 $$
 \int_{[0,1]}\chi_{[a,b[}(x) d \lambda x +\epsilon= b-a +\epsilon.\eqno (3.19)
$$
 Since $\epsilon$ is arbitrarily small, we have (3.2).
 \end{proof}

\section{Historical Background  for invariant extensions of the Haar measure}

\subsection{On Waclaw Sierpiniski problem}

By Vitali's celebrate theorem about existence of the linear Lebesgue non-measurable subset has been  shown that the domain of the Lebesgue measure in $R$  differs from the power set of the real axis $R$.  In this context the following question was 
naturally appeared:

{\it ''How far can we extend Lebesgue
measure and what properties can such an extension
preserve?''}

In 1935 E. Marczewski, applied Sierpi\'{n}ski  construction of an almost invariant set $A$, obtained a proper invariant extension of the Lebesgue measure in which the extended $\sigma$-algebra had contained  new sets of positive finite measure.   In connection with this result, Waclaw Sierpiniski  in 1936  posed the
following

{\bf  Problem (Waclaw Sierpi\'{n}iski)} {\it Let $D_n$ denotes the group of all isometrical transformations of the $R^n$. Does there exist any maximal $D_n$ -invariant
measure?}

The first result in this direction was obtained by Andrzej Hulanicki \cite{Hulanicki1962}
as follows:

{\bf Proposition} ( Andrzej Hulanicki (1962))   {\it If the continuum $2^{\aleph_0}$ is not real valued measurable cardinal then there does not exist any maximal invariant extension of the Lebesgue measure.}
 
 This
result was also obtained independently  by S. S. Pkhakadze \cite{Pkhakadze1958}
using similar methods.

In 1977 A. B. KHarazishvili  got the same answer in the one-dimensional
case without any set-theoretical assumption
(see \cite{Khar77} ).

Finally, in 1982 Krzysztof Ciesielski
and Andrzej Pelc generalized Kharazishvili's result to all
$n$-dimensional Euclidean spaces (see \cite{Cies85}).

Following  Solovay \cite{Solovay1970}, if the system of axioms   ''$ZFC~\&$~There exists  inaccessible cardinal'' is consistent then   the systems of axioms
 '' $ZF ~\& CD~\& $	Every set of reals is Lebesgue  measurable'' is also consistent.  This result implies that the answer to
  Waclaw Sierpiniski's problem  is affirmative.

Taking   Solovay's result  on the one hand, and  Krzysztof Ciesielski
and Andrzej Pelc  (or Andrzej Hulanicki or Pkhakadze)  result on the  other  hand,  we deduce  that the  Waclaw Sierpiniski's question is not solvable within the theory  $ZF ~\& CD. $

\subsection{ On Lebesgue measure's   invariantly extension  methods  in $ZFC$ }

Now days  there exists  a reach methodology for a construction of invariant  extensions of the Lebesgue  measure in $R^n$ as well the Haar measure  in a locally compact Hausdorff  topological group. Let us briefly consider main of them.

{\bf Method I.}( Jankowka-Wiatr)  Following \cite{HulanickiRyll-Nardzewski1979}, the first idea of extending the Lebesgue measure in $R^n$ to a larger $\sigma$-algebra in such a what that it remains invariant under translations belongs to Jankowka-Wiatr who in 1928 observed that one can add new sets to the $\sigma$-ideal of sets of Lebesgue measure zero and still preserve the invariance of the extended measure.  This method can be described  as follows:

{\it   Let  $K$ be a  shift-invariant $\sigma$-ideal
in the $n$-dimensional  Euclidean  space   $R^n$ such
that
$$(\forall Z)(Z \in K \rightarrow m_{\ast}(Z)=0),$$
where $m_{\ast}$ denotes the inner measure defined by $n$-dimensional  Lebesgue measure $m$.
Then the functional $\overline{m}$  defined by
$$\overline{m}((X \cup Z') \setminus  Z'')=m(X),$$
where $X$ is a Borel subset of $R^n$ and $Z'$ and $Z''$are elements
of the $\sigma$-ideal $K$, ~is an $D_n$-invariant extension of the Lebesgue measure
measure $m$.}
\medskip

{\bf Method II.}(  E. Szpilrajn(E. Marczewski)) 
By using Sierpi\'{n}ski's decomposition $\{A,B\}$ of the  $R^2$,  E. Szpilrajn  noted that 
the following two conditions 

(i)  $card(A\ triandle (x+A))<c,~card(A\triangle (x+A))<c$ for each $x \in R^2$;~

(ii) $card(A \cap F)=card(B \cap F)=2^{\aleph_0}$ for each closed set $F \subseteq R^2$  whith  $m(F)>0$.

holds true.

Further,  he  constructed   a proper shift-invariant extension  $\overline{m}$ of the Lebesgue measure $m$ in $R^2$ 
as follows
$$\overline{m}((A \cap X )\cup (B \cap Y))=1/2(m(X)+m(y))$$
for $X,Y \in dom(m)$.

\medskip

{\bf Method III.} ( Oxtoby and Kakutani )  Some methods of combinatorial set theory have lately been
successfully used in  measure extension problem.
Among them, special mention
should be made of the method of constructing a maximal (in the
sense of cardinality) family of independent families of sets in
arbitrary infinite base spaces. The question of the existence of a
maximal (in the sense of cardinality) $\aleph_0$-independent \footnote{We say that a family $(X_i)_{i \in I}$ of subsets of the set $E$
is $\aleph_0$-independent if the condition
$(\forall J)(J \subset I ~ \& ~ \mbox{card}(J)
 < {\aleph_0} \rightarrow \bigcap_{i \in J}
 \overline{X}_i \neq \emptyset )$
holds, where
$(\forall i)(i \in I \rightarrow (\overline{X}_i=X_i) \vee (\overline{X}_i=
(E \backslash X_i))).$  If in addition, this condition holds true  for each $J$ with $card(J) \le \aleph_0$, then $(X_i)_{i \in I}$ is called  strict $\aleph_0$-independent.}family of
subsets of an uncountable set $E$ was considered by A. Tarski. He
proved that this cardinality is equal to ~$2^{card(E)}$.

This result  found an interesting application in general topology.
For example,  it was proved that in an arbitrary infinite space
$E$ the cardinality of the class of all ultrafilters is equal
to~$2^{2^{card(E)}}$~(see, e.g., \cite{KuMos80}).

The combinatorial question of the existence of a maximal (in the sense of cardinality) strict $\aleph_0$-independent 
family of
subsets of a set $E$ with cardinality of the continuum also was investigated and was proved that this cardinality is equal to $2^c$.

This combinatorial result  found  an interesting application in the Lebesgue measure theory. For example,  Kakutani and Oxtoby \cite{KakOxt50}
firstly constructed  a family  $\cal{A}$ of almost invariant subsets of the circle in such a way that
$$
\cap_{n=1}^{\infty}A_n^{\epsilon_n}
$$
has outer measure $1$ for an arbitrary sequence $\{ A_n \}$ of sets from $\cal{A}$ and arbitrary sequence $\{\epsilon_n\}, \epsilon_n=0,1$.  The putting $\overline{m}(A)=1/2$ for $A$ in $\cal{A}$ they obtained an extension of the Lebesgue measure on the circle to an invariant measure $\overline{m}$ such that $L_2(\overline{m})$ has the Hilbert space dimensional equal to $2^c$.

Using the same combinatorial  result, A.B. Kharazishvili constructed a maximal (in the sense of
cardinality) family of orthogonal elementary $D_n$-invariant
extensions of the Lebesgue measure (see \cite{Khar83}).

The combinatorial question of the existence of a
maximal (in the sense of cardinality) strict $\aleph_0$-independent family of
subsets of a set $E$ with $card(E^{\aleph_0})=card(E)$  was investigated  in  \cite{Pan95}  and   it was shown that this cardinality
is equal to $2^{card(E)}$.
Using this result, G.Pantsulaia \cite{Pan89-2} extended  Kakutani and Oxtoby \cite{KakOxt50} method for a  construction of a maximal (in the sense of
cardinality) family of orthogonal elementary $H$-invariant
extensions of the Haar measure defined in a locally compact $\sigma$-compact  topological group with $card(H^{\aleph_0})=card(H)$.

{\bf Method IV.}( Kodaira and Kakutani method) ~   Kodaira and Kakutani \cite{KodKak50} invented the following method of extended the Lebesgue measure on the circle to an invariant measure as follows:

Let produce a {\it character} $\pi$ of the circle, i.e. a homomorphism $\pi : T \to T$ in such a way that the outer Lebesgue measure of its graph $D_{\pi}$ is equal to $1$ in $T \times T$. Then the extended $\sigma$-algebra $\overline{B}$ consists  of sets $A_M=\{ x: (x, \pi(x)) \in M\}$, where $M$ is Lebesgue measurable set in $T \times T$ and the extended measure $\overline{m}$ is $\overline{m}(A_M)=(m \times m)(M).$  Note that the discontinuous character $\pi$ becomes $\overline{B}$-measurable. It has been noticed later in \cite{Hulanicki1959} that one can produce $2^c$ such characters so that they all become measurable and $L^2(\overline{m})$ is of Hilbert space dimension $2^c$.

This method have been modified   for  $n$-dimensional Euclidean space   in  \cite{Pan95}(Received  7.  November  1993)  for a construction of  the invariant extension $\mu$  of the $n$-dimensional Lebesgue measure such that there exists a $\mu$-measurable set with only one density point.  This result answered positively to a certain question  stated by A.B. Kharazishvili (cf. \cite{Khar83}, Problem 9, p. 200). Knowing this result,  A.B. Kharazishvili considered  similar but originally modified method and  extended  previous  result in  \cite{Khar94}( Received 15. March 1994)  as follows:  {\it there exists an invariant extension $\mu$ of the classical Lebesgue measure such that $\mu$ has the uniqueness property and there exists a $\mu$-measurable set with only one density point.}.

{\bf Method  $\star$ }. More lately,  Kodaira and Kakutani method have been modified   for  an uncountable locally compact $\sigma$-compact topological
group $H$ with $card(H^{\aleph_0}) = card(H)$   in  \cite{Pan04-3}  as follows: Let $E$ be  a  set with $2 \le card(E) \le card(H)$ and let $\mu$  be a probability measure in $E$  such that  each $X \in dom(\mu)$ for which $card(X)<card(E)$.    Let produce a {\it function}  $f : H \to E$ in such a way that
the following two conditions

$1)~~ (\forall  e)(\forall F)(e \in E   ~\&~ (F$ is a closed subset
of the $H$  with~$m(F) > 0) \rightarrow
\mbox{card}f^{-1}(e) \cap F)=card(E));$

$2)~~ (\forall E')(\forall g)(E' \subseteq E~ \& ~ g \in H
\rightarrow card(g(\bigcup_{e \in E'}f^{-1}(e)) \triangle (\bigcup_{e
\in E'}f^{-1}(e))) < card(H) ).$

holds true.    Then the extended $\sigma$-algebra $\overline{B}$ consists  of sets $A_M=\{ x: (x,f(x)) \in M\}$, where $M \in dom(m)\times dom(\mu)$.   Then the  extended measure $\overline{m}_{\mu}$  is defined by $\overline{m}_{\mu}(A_M)=(m \times \mu) (M).$  Note that $\overline{m}_{\mu}$ is a non-elementary invariant extension  of the measure $m$  iff  the measure $\mu$  is diffused.  It has been noticed  that   one can produce $2^{card(H)}$   such  functions   so that they all become measurable and $L^2(\overline{m}_{\mu})$ is of Hilbert space dimension $2^{card(H)}$.

Note that when $card(E)=2$, $\mu$ is a normalized counting measure in $E$ and $f :  R^2 \to E$  is defined by $f(x)=\chi_{A}(x)$,  then   Method  $\star$   gives  Marczewski  method.

When $H=E=T$,  $\mu=m$ and $f=\pi$ is  a character, then   Method  $\star$  gives Kodaira and Kakutani method.

Now let us discuss whether Method  $\star$  gives Oxtoby and Kakutani method. In this context we need some auxiliary facts.

\begin{lemma}(  \cite{Pan07},   {\bf Theorem 11.1} , p. 158   )   If an infinite set $E$ satisfies the
condition
$$Card(E^{\aleph_0})=card(E),$$
 then there exists a
maximal (in the sense of cardinality) strictly $\aleph_0$-independent
family $(A_i)_{i \in I}$ of subsets of the space $E$, such that
$$card(I)=2^{card(E)}.$$
\end{lemma}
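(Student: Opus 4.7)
The approach is to adapt the classical Fichtenholz--Kantorovich--Tarski construction of a maximal independent family, upgrading finite test--fragments to countable ones. In the classical argument one works on the ground set $S_0=\{(F,\mathcal{F}):F\subseteq E\text{ finite},\ \mathcal{F}\subseteq\mathcal{P}(F)\}$, whose cardinality is $\mbox{card}(E)$, and one associates to each $A\subseteq E$ the set $X_A=\{(F,\mathcal{F})\in S_0:A\cap F\in\mathcal{F}\}$; a finite Boolean combination of the $X_A$'s is witnessed by taking $F$ large enough to separate the finitely many sets involved, and one obtains an independent family of cardinality $2^{\mbox{card}(E)}$. To secure $\aleph_0$-independence I would replace ``finite'' by ``countable'' throughout and work with
$$
S=\{(C,\mathcal{C}):C\subseteq E,\ \mbox{card}(C)\le\aleph_0,\ \mathcal{C}\subseteq\mathcal{P}(C),\ \mbox{card}(\mathcal{C})\le\aleph_0\},
$$
together with $X_A=\{(C,\mathcal{C})\in S:A\cap C\in\mathcal{C}\}$ for each $A\in\mathcal{P}(E)$.

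The verification splits into two parts, and I would attack the cardinal bookkeeping first, since it is the step that genuinely uses the hypothesis. The set of countable subsets of $E$ has cardinality at most $\mbox{card}(E)^{\aleph_0}=\mbox{card}(E)$ by assumption. For each fixed countable $C$, the restriction $\mbox{card}(\mathcal{C})\le\aleph_0$ bounds the number of admissible $\mathcal{C}$ by $(2^{\aleph_0})^{\aleph_0}=2^{\aleph_0}$. Since $\mbox{card}(E^{\aleph_0})=\mbox{card}(E)$ forces $2^{\aleph_0}\le\mbox{card}(E)$, the product estimate gives $\mbox{card}(S)=\mbox{card}(E)$. This is the main obstacle: the countability cap on $\mathcal{C}$ is indispensable, because dropping it would introduce a factor $2^{2^{\aleph_0}}$ and inflate $\mbox{card}(S)$ beyond $\mbox{card}(E)$, destroying the strategy.

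The strict $\aleph_0$-independence of $(X_A)_{A\subseteq E}$ is then a clean countable analogue of the classical finite separation argument. Given any countable sequence of pairwise distinct sets $A_1,A_2,\ldots\subseteq E$ and any signs $(\epsilon_n)_{n\in\mathbb{N}}\in\{0,1\}^{\mathbb{N}}$, pick for each pair $m\ne n$ a separating element $x_{mn}\in A_m\triangle A_n$ and set $C=\{x_{mn}:m\ne n\}$. Then $C$ is countable and the traces $A_n\cap C$ are pairwise distinct, so $\mathcal{C}=\{A_n\cap C:\epsilon_n=1\}$ is a countable subfamily of $\mathcal{P}(C)$ and $(C,\mathcal{C})\in S$. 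By construction $(C,\mathcal{C})\in X_{A_n}$ precisely when $\epsilon_n=1$, so the prescribed signed intersection is nonempty.

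To finish I would note that distinct $A,B\subseteq E$ are already separated by a singleton probe $(\{x\},\{\{x\}\})$ with $x\in A\triangle B$, so the $X_A$ are pairwise distinct and therefore form a family of cardinality $2^{\mbox{card}(E)}$ on $S$. Transferring via any bijection $S\leftrightarrow E$ yields a strictly $\aleph_0$-independent family on $E$ of the same cardinality. Maximality is automatic, since any family of subsets of $E$ has cardinality at most $|\mathcal{P}(E)|=2^{\mbox{card}(E)}$.
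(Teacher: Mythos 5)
The paper does not prove this lemma; it is quoted from \cite{Pan07} (Theorem 11.1), and the surrounding discussion of Method III attributes the underlying combinatorial idea to Tarski's independent-families construction. Your argument is correct and is essentially that same standard route: the countable-probe version of the Fichtenholz--Kantorovich--Tarski ground set, with the hypothesis $\mbox{card}(E^{\aleph_0})=\mbox{card}(E)$ used exactly where it must be (to keep $\mbox{card}(S)=\mbox{card}(E)$), and the pairwise-distinct traces on a countable separating set $C$ yielding strict $\aleph_0$-independence.
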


\begin{lemma}(  \cite{Pan07},   {\bf Lemma  11.2} , p. 163   )  Let $H$ be an arbitrary locally compact
$\sigma$-compact topological group, $\lambda$ be the Haar measure
defined on the group $H$ and let $\alpha$ be an arbitrary cardinal
number such that:
$$\alpha \leq {card(H)}.$$

Then there exists a family $(X_i)_{i \in I}$ of subsets of the set
$H$ such that:

$1)~~ Card(I)= \alpha;$

$2)~~ (\forall i)(\forall i')(i \in I \ \& \ i' \in I \ \& \ i
\neq i' \rightarrow X_i \cap X_{i'}=\emptyset );$

$3)~~ (\forall i)(\forall F)(i \in I ~\&~ (F$ is a closed subset
of the space $H$ with~$\lambda(F) > 0) \rightarrow$

$\mbox{card}(X_i \cap F)=\mbox{card}(H));$

$4)~~ (\forall I')(\forall g)(I' \subseteq I \ \& \ g \in H
\rightarrow card(g(\bigcup_{i \in I'} X_i) \triangle (\bigcup_{i
\in I'} X_i)) < card(H)).$
\end{lemma}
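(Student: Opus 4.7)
The plan is to generalize the transfinite-recursion construction behind Lemma~2.2 (originally Lemma~4 of \cite{Khar83}) from the real line to a locally compact $\sigma$-compact topological group. Set $\kappa := \mathrm{card}(H)$ and fix an index set $I$ of cardinality $\alpha \leq \kappa$. Two preparatory facts make the induction feasible: every closed $F \subseteq H$ with $\lambda(F) > 0$ has cardinality exactly $\kappa$ (by regularity of the Haar measure together with $\sigma$-compactness), and the family $\mathcal{F}$ of all such $F$ can be indexed as $(F_\xi)_{\xi < \kappa}$ in such a way that each distinct $F \in \mathcal{F}$ appears $\kappa$ times on the list.

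I would fix an auxiliary subgroup $D \leq H$ with $\mathrm{card}(D) < \kappa$ whose left cosets serve as the bookkeeping unit, and run a transfinite recursion on $\xi < \kappa$: at stage $\xi$, for each $i \in I$, pick $x_{\xi, i} \in F_\xi$ so that its $D$-coset has not been selected at any earlier stage nor at stage $\xi$ for any $i' \neq i$. This is possible because only $|\xi| \cdot \alpha < \kappa$ cosets are forbidden at each stage while $F_\xi$ meets $\kappa$ distinct $D$-cosets (as $\lambda(F_\xi) > 0$ and $\mathrm{card}(D) < \kappa$). Setting $X_i := \{x_{\xi, i} : \xi < \kappa\}$, condition (1) is built in, (2) follows because the chosen $D$-cosets are pairwise disjoint across indices, and (3) follows because each $F \in \mathcal{F}$ appears as $F_\xi$ for $\kappa$-many $\xi$, each stage contributing a fresh point to $X_i \cap F$.

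The technical heart is condition (4), the almost $H$-invariance of every union $\bigcup_{i \in I'} X_i$: a set built by arbitrary transfinite choice has no a priori reason to be almost-fixed by a generic $g \in H$. The remedy, generalizing Sierpi\'nski's classical construction of an almost-invariant set, is to carry out the whole recursion inside a prefixed ambient set $A \subseteq H$ satisfying $\mathrm{card}(gA \triangle A) < \kappa$ for every $g \in H$ together with $\mathrm{card}(A \cap F) = \kappa$ for every $F \in \mathcal{F}$. Such an $A$ can be manufactured from the maximal strictly $\aleph_0$-independent family guaranteed by the preceding lemma, or alternatively from a transversal of a carefully chosen quotient of $H$ in the spirit of Kakutani--Oxtoby \cite{KakOxt50}. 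Choosing the $x_{\xi, i}$ inside $A$ and strengthening the stage-$\xi$ bookkeeping so as to respect the almost-invariance witnesses of $A$ causes the almost-invariance of $A$ to pass to every union $\bigcup_{i \in I'} X_i$. Coordinating this bookkeeping simultaneously across all $g \in H$ and all $I' \subseteq I$, so that at each stage only fewer than $\kappa$ cosets are forbidden, is the delicate step, and it relies on the cardinality identity $\alpha \cdot \kappa = \kappa$ being applied at every stage of the recursion.
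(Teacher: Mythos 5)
The paper itself gives no proof of this lemma --- it is quoted verbatim from \cite{Pan07} (Lemma 11.2) --- so your argument has to stand on its own. Conditions (1)--(3) are indeed reachable by the transfinite recursion you describe (modulo a bookkeeping slip noted below), but the proof collapses exactly at condition (4), and the remedy you sketch does not repair it. Almost-invariance is not inherited by subsets: even if $A\subseteq H$ satisfies $\mathrm{card}(gA\triangle A)<\kappa$ for all $g$, a set $X\subseteq A$ obtained by picking one point per stage can have $gX\cap X=\emptyset$ for many $g$, so ``choosing the $x_{\xi,i}$ inside $A$'' buys nothing by itself. Moreover, the coordination you defer to the end is not achievable in the form you state it: condition (4) quantifies over all $g\in H$ ($\kappa$ many) and all $I'\subseteq I$ ($2^{\alpha}$ many), so one cannot keep ``fewer than $\kappa$ forbidden cosets per stage'' while protecting every pair $(g,I')$; and already for $\alpha=\kappa$ (the case the paper actually uses, $I=[0,1]$, $\kappa=c$) your count $|\xi|\cdot\alpha<\kappa$ fails after the first stage.

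The idea that actually delivers (4) --- the heart of Lemma 4 in \cite{Khar83} and of Lemma 11.2 in \cite{Pan07} --- is to select whole cosets of an increasing exhausting chain of small subgroups, not individual points. Write $H=\{h_\xi:\xi<\kappa\}$ and let $H_\xi$ be the subgroup generated by $\{h_\eta:\eta<\xi\}$, so $(H_\xi)_{\xi<\kappa}$ increases to $H$ with $\mathrm{card}(H_\xi)\le\mathrm{card}(\xi)+\aleph_0<\kappa$. At stage $\xi$ choose $x_\xi\in F_{\sigma(\xi)}$ with $x_\xi\notin\bigcup_{\eta<\xi}H_\xi x_\eta$ (possible, the forbidden set having cardinality $<\kappa\le\mathrm{card}(F_{\sigma(\xi)})$), and put $X_i=\bigcup_{\xi\in T_i}H_\xi x_\xi$, where $\kappa=\bigcup_{i\in I}T_i$ is a partition into $\alpha$ cofinal pieces matched with an enumeration $\sigma$ in which every closed positive-measure set recurs $\kappa$ times inside each $T_i$. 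The orbits $H_\xi x_\xi$ are pairwise disjoint (giving (2)), each $F$ receives $\kappa$ of the $x_\xi$ from each colour (giving (3)), and for (4): given $g\in H$ there is $\xi_0$ with $g\in H_{\xi_0}$, hence $gH_\xi x_\xi=H_\xi x_\xi$ for every $\xi\ge\xi_0$, so for every $I'$ the set $g(\bigcup_{i\in I'}X_i)\triangle\bigcup_{i\in I'}X_i$ is contained in $\bigcup_{\xi<\xi_0}(H_\xi x_\xi\cup gH_\xi x_\xi)$, of cardinality at most $\mathrm{card}(\xi_0)\cdot\mathrm{card}(H_{\xi_0})\cdot\aleph_0<\kappa$, uniformly in $g$ and $I'$. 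This literal invariance of all but an initial segment of the construction is what a single fixed subgroup $D$ and an ambient almost-invariant set $A$ cannot provide. Two smaller points you should also address: the enumeration of the closed positive-measure sets in order type $\kappa$ needs justification for a general locally compact $\sigma$-compact $H$ (e.g.\ via the Kakutani--Kodaira metrizable quotient), and the recursion must be run over $\kappa\times I$ re-ordered in type $\kappa$ rather than over ``all $i\in I$ at each stage $\xi$''.
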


\begin{lemma}(  \cite{Pan07},   {\bf Lemma  11.5} , p. 169   ) Let $E$ be an uncountable base space with
$card(E^{\aleph_0})= card(E)$. Then there exists a non-atomic
probability measure $\cal P$ such that the following conditions
hold:

$a)~~~ (\forall X)(X \subseteq E \ \& \ card(X) < card(E)
\rightarrow {\cal P}(X)=0);$

$b)~~$ the topological weight $a({\cal P})$ of the metric space
$(\mbox{dom}({\cal P}),{\rho}_{{\cal P}})$ associated with measure
${\cal P}$ is maximal, in particular, is equal to $2^{card(E)}$.
\end{lemma}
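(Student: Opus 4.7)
The plan is to realize $\mathcal{P}$ as the pull-back of the generalized Bernoulli measure on a Cantor-type compact group under a coding of $E$ built from a maximal strictly $\aleph_0$-independent family.

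First, I would invoke Lemma 4.1: since $\mathrm{card}(E^{\aleph_0})=\mathrm{card}(E)$, there is a strictly $\aleph_0$-independent family $(A_i)_{i\in I}$ of subsets of $E$ with $\mathrm{card}(I)=2^{\mathrm{card}(E)}$. I would then define the coding map $\varphi:E\to\{0,1\}^I$ by $\varphi(x)(i)=\chi_{A_i}(x)$, and let $\nu$ denote the Haar (equivalently, $(1/2,1/2)$-Bernoulli product) probability measure on the compact group $\{0,1\}^I$. The key observation is that strict $\aleph_0$-independence translates exactly into the statement that for every countable $J\subseteq I$ the projection $\pi_J\circ\varphi:E\to\{0,1\}^J$ is surjective; this forces $\varphi(E)$ to meet every Borel set in $\{0,1\}^I$ of positive $\nu$-measure (since every such set is determined by countably many coordinates), and hence $\varphi(E)$ has full outer $\nu$-measure.

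Next, I would set
\[
\mathrm{dom}(\mathcal{P})=\{\varphi^{-1}(B): B\in \mathrm{dom}(\nu)\},\qquad \mathcal{P}(\varphi^{-1}(B))=\nu(B),
\]
which is well-defined because full outer measure of $\varphi(E)$ makes $\varphi(E)^c$ a $\nu$-null set. The resulting $\mathcal{P}$ is a probability measure on $E$, and it is non-atomic, inheriting non-atomicity from $\nu$ on $\{0,1\}^I$ (every singleton there is $\nu$-null since $I$ is uncountable). For property (b), I would note that for any $i\neq i'$ one has $\mathcal{P}(A_i\triangle A_{i'})=\nu(\{s\in\{0,1\}^I:s(i)\neq s(i')\})=1/2$, exhibiting $\{A_i:i\in I\}$ as a $1/2$-separated family of cardinality $2^{\mathrm{card}(E)}$ inside the metric space $(\mathrm{dom}(\mathcal{P}),\rho_{\mathcal{P}})$. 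Its weight is therefore at least $2^{\mathrm{card}(E)}$; the matching upper bound follows from the standard cardinality estimate on the $\sigma$-algebra generated by a family of that size.

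The main obstacle is property (a). For $X\subseteq E$ with $\mathrm{card}(X)<\mathrm{card}(E)$ I would want $\varphi(X)\subseteq\{0,1\}^I$ to be $\nu$-null, since then $X\subseteq\varphi^{-1}(\varphi(X))\in\mathrm{dom}(\mathcal{P})$ with $\mathcal{P}$-measure zero; but low-cardinality subsets of $\{0,1\}^I$ are not automatically $\nu$-null, so the bare strictly $\aleph_0$-independent family from Lemma 4.1 is insufficient. My plan is to refine its construction: there are at most $2^{\mathrm{card}(E)}=\mathrm{card}(I)$ subsets of $E$ of cardinality less than $\mathrm{card}(E)$, so I can fix a transfinite enumeration of them and recursively arrange, during the production of the $\aleph_0$-independent family, that for each such $X$ countably many indices $i\in I$ satisfy $A_i\cap X=\emptyset$. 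This traps $\varphi(X)$ inside an intersection of countably many independent cylinders each of $\nu$-measure $1/2$, forcing $\nu(\varphi(X))=0$ and completing the proof.
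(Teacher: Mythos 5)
First, a remark on context: the paper itself gives no proof of this statement --- it is imported verbatim as Lemma 11.5 of \cite{Pan07} --- so your attempt can only be measured against the mathematics, not against an argument in the text. Your overall architecture is the right one and matches the Kakutani--Oxtoby-style construction the cited source is built on: code $E$ into $\{0,1\}^I$ by $\varphi(x)(i)=\chi_{A_i}(x)$ using the maximal strictly $\aleph_0$-independent family of Lemma 4.1, note that strict $\aleph_0$-independence makes $\pi_J\circ\varphi$ surjective for every countable $J$, hence $\varphi(E)$ meets every nonempty set of the product $\sigma$-algebra and has full outer Bernoulli measure, and pull the measure back. Your treatment of non-atomicity and of part (b) --- the $\tfrac12$-separated family $\{A_i\}_{i\in I}$ of cardinality $2^{\mathrm{card}(E)}$ inside $(\mathrm{dom}(\mathcal{P}),\rho_{\mathcal{P}})$, matched by the cardinality bound $\bigl(2^{\mathrm{card}(E)}\bigr)^{\aleph_0}=2^{\mathrm{card}(E)}$ on the generated $\sigma$-algebra --- is correct.

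The genuine gap is in part (a), which you rightly identify as the main obstacle but then only gesture at. The proposed fix --- re-running the construction of the independent family while reserving, for each of the $2^{\mathrm{card}(E)}$ sets $X$ with $\mathrm{card}(X)<\mathrm{card}(E)$, countably many indices $i$ with $A_i\cap X=\emptyset$ --- is not verified, and its compatibility with strict $\aleph_0$-independence is not automatic: after imposing the disjointness requirements you must still produce, for every countable $J\subseteq I$ and every $\epsilon:J\to\{0,1\}$, a point of $\bigcap_{i\in J}A_i^{\epsilon_i}$ that avoids the union of the countably many ``forbidden'' small sets attached to the indices of $J$. This works only if (i) every countable Boolean combination is arranged to have cardinality $\mathrm{card}(E)$, not merely to be nonempty, and (ii) a countable union of sets of cardinality $<\mathrm{card}(E)$ still has cardinality $<\mathrm{card}(E)$; point (ii) requires $\mathrm{cf}(\mathrm{card}(E))>\aleph_0$, which does follow from $\mathrm{card}(E^{\aleph_0})=\mathrm{card}(E)$ by K\"onig's theorem, but is nowhere invoked in your sketch and is exactly where the hypothesis of the lemma enters. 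A cleaner route avoids tampering with the family altogether: once each countable Boolean combination has cardinality $\mathrm{card}(E)$, every $X$ with $\mathrm{card}(X)<\mathrm{card}(E)$ has complement of full outer measure, i.e.\ inner measure zero, and such sets form a $\sigma$-ideal (again by the cofinality remark); the Marczewski extension procedure of Lemma 2.1 (Method I of Section 4) then enlarges the pulled-back measure so as to annihilate this whole ideal, giving (a) directly and in harmony with the machinery the paper already uses.
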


Now put  $H=G$ and  $ E=\{0,1\}^{N}$.  For $g \in G$, we put $h(g)=(g,i)$, where $i$ is a unique index for which $g \in X_i$(cf. Lemma 4.2 for $\alpha=card(H)=2^{\aleph_0}$). We 
 set $\overline {A}_i=\cup_{ e \in  X_i}  f^{-1}(e)$  for $i \in  I$. Let $ {\cal P}$ comes from  4.3.  Now if we consider the invariant extension $\overline{m}_{ {\cal P}}$
 of the measure $m$, we observe that $\overline{m}_{ {\cal P}}(\overline {A}_s)=1/2$ for each $\overline {A}_s \in \cal{A}:= \{\overline {A}_i : i \in I\}$.  Since $\cal{A}$ is the family of strictly $\aleph_0$-independent almost invariant subsets of $G$, we claim that   Method  $\star$    gives just  above described Oxtoby and Kakutani method.

{\bf Method V.} ( Kharazishvili ).  This approach, as usual,  can be used for uncountable commutative groups and  is  based on purely algebraic properties those groups, which are  not assumed to be endowed with any topology but only are equipped with a nonzero $\sigma$-finite invariant
measures.  Here essentially  is used Kulikov's well known theorem about covering of any commutative group by  increasing (in the sense of  inclusion) countable sequence of subgroups of $G$ which are  direct sum of cyclic groups (finite or infinite) (see, for example \cite{Khar05}, \cite{Khar09}).

{\bf Definition 4.4(Krarazishvili)} Let $E$ be a base space, $G$ be a group of
transformations of $E$ and let $X$ be a subset of the space $E$.
$X$ is called  a  $G$-absolutely negligible set if for any
$G$-invariant $\sigma$-finite measure $\mu$, there exists its $G-$invariant extension $\Bar{\mu}$  such that $X \in
\mbox{\mbox{dom}}(\Bar{\mu})$ and $\Bar{\mu}(X)=0$.
\medskip

A geometrical characterization of absolutely negligible subsets,
due to A.B. Kharazishvili, is presented in the next proposition.
\medskip

{\bf Theorem 4.5} {\it Let $E$ be a base space, $G$ be a group of
transformations of $E$ containing some  uncountable subgroup
acting freely  in  $E$, and $X$ be  an arbitrary subset of the
space $E$. Then the following two conditions are equivalent:

1)~~ X is a $G$-absolutely negligible subset of the space $E$;

2)~~ for an arbitrary countable $G$-configuration \footnote{A subset $X'$ of $E$ is called a countable $G$-configuration of $X$ if there 
is a countable family $\{g_k : k \in N\}$ of elements of $G$ such that $X' \subseteq \cup_{k \in N} g_k(X)$.} $X'$ of the set
$X$, there exists a countable  sequence $(h_k)_{k \in N}$ of
elements of $G$ 

$$\bigcap_{k \in N} h_k(X')=\emptyset.$$}

It is of interest that the class  of all countable $G$-configurations of the fixed  $G$-absolutely negligible subset  constitutes  a $G$-invariant $\sigma$-ideal such that the inner measure of each element of this class is zero with respect to any $\sigma$-finite $G$-invariant measure in $E$. Hence, by using the natural modification of the Method I one can obtain $G$ -invariant extension of an arbitrary   $\sigma$-finite $G$-invariant measure in $E$.

In 1977 A. B. Kharazishvili  constructed   the partition  of the real axis  $R$  in to  the countable  family of  $D_1$ -absolutely  negligible sets  and  got   the negative  answer to the question   of   Waclaw Sierpi\'{n}iski  in the one-dimensional  case without any set-theoretical assumption (see  \cite{Khar77}).

Finally, in 1982 Krzysztof Ciesielski
and Andrzej Pelc generalized Kharazishvili's result to all
$n$-dimensional Euclidean spaces, more precisely,  they   constructed   the partition  of the Euclidean  space   $R^n$  in to  the countable  family of  $D_n$ -absolutely  negligible sets  and  got   the negative   answer  to the question   of     Waclaw Sierpi\'{n}iski   in the $n$-dimensional  case without any set-theoretical assumption (see \cite{Cies85}).

By using the method of absolutely negligible sets elaborated by A.Kharazishvili  \cite{Khar83},  P. Zakrzewski  \cite{Zakrzewski95} answered positively to a question of  Ciesielski \cite{Cies91}  asking {\it whether an isometrically invariant $\sigma$-finite countably additive measure on $R^n$ admits  a strong countably additive isometrically invariant extension.} It is obvious that this question is generalization of the  above  mentioned    Waclaw Sierpi\'{n}iski  problem.

\section{Discussion}

Let $G$ be a compact Hausdorff topological group and $\lambda$ be a Haar measure in $H$. By
$~{B}(G)$  we mean the set of all bounded real-valued Borel-measurable functions
on $G$. Under the norm $||f||=\sup_{g \in G}|f(g)|$ for $f \in ~{B}(G)$, the set $~{B}(G)$
forms a Banach space, and even a Banach algebra if algebraic operations
for functions are defined in the usual way. The subset  $~{R}(G)$ of  $~{B}(G)$ consisting
of all real-valued continuous functions on $G$ is then a Banach subalgebra
of $~{B}(G)$.

Following \cite{KuNi74}(see Definition 1.1, p.171), The sequence $(x_n)_{n \in N}$ of elements in  $G$  is
called $\lambda$-u.d. in $G$ if
$$
\lim_{n \to \infty}\frac{\sum_{k=1}^nf(x_k)}{n}=\int_{G} f(x)d \lambda(x)\eqno (5.1)
$$
for all $f \in ~{R}(G)$.

Note that the theory of uniform distribution is well developed in compact Hausdorff topological groups (see, for example \cite{KuNi74}, Chapter 4) as well the theory of invariant extensions of Haar measures in the same groups(see Section 4). Here naturally arise a question asking {\it whether can be introduced the concept of uniform distribution for invariant extensions of the Haar measure in compact Hausdorff topological groups.} We wait that by using similar manner used in Section 3 and the methodology  briefly  described in Section 4, one can resolve this question.

\medskip
\noindent {\bf Acknowledgment}. The authors wish to thank the referees for their constructive critique of the first draft.


\begin{thebibliography}{99}






\bibitem{Cies85} K. Ciesielski and A. Pelc, {\em Extensions of invariant
measures on Euclidean spaces,} Fund. Math. {\bf 125(1)}
(1985), 1-10.

\bibitem{Cies91}{
K. Ciesielski,}  {\em Query 5,} Real Analysis Exchange, {\bf 16(1)} (1990-91), 374.



\bibitem{KakOxt50} {S. Kakutani, J. Oxtoby,} {\em Construction of non--separable invariant extension
of the Lebesgue measure space}, Ann. of  Math., {\bf 52(2)}(1950), 580--590.


\bibitem{Khar77} {A.B. Kharazishvili,}  On Sierpifiski's problem concerning
strict extendability of an invariant measure,
Soviet. Math. Dokl. {\bf 18(1)} (1977), 71-74.



\bibitem{Khar83} {A.B. Kharazishvili,} {\em Invariant extensions of the Lebesgue measure}, Tbilisi,
1983 (in Russian).




\bibitem{Khar94} { A.B. Kharazishvili,}{\em  Some remarks on density points and the uniqueness property for invariant extensions of the Lebesgue measure.} Acta Univ. Carolin. Math. Phys., {\bf  35(2)} (1994),  33--39.

\bibitem{Khar96} { A.B. Kharazishvili,} {\em Selected topics of point set theory}, Wydawnictwo
Uniwersytetu Lodzkiego. Lodz, 1996.


\bibitem{Khar05} { A.B. Kharazishvili,}~  {\em On absolutely negligible sets in
uncountable solvable groups}, Georgian Math. J., {\bf
12(2)} (2005), 255--260.


\bibitem{Khar09} A.B. Kharazishvili, {\em On thick subgroups of uncountable $\sigma$-compact locally compact commutative groups,} Topology Appl. {\bf 156(14)} (2009),  2364--2369.



\bibitem{KodKak50} {K. Kodaira, S. Kakutani,} {\em A nonseparable tranlation--invariant extension
of the Lebesgue  measure space}, Ann. of Math., {\bf 52}
(1950), 574--579.

\bibitem{KuNi74} { L. Kuipers, H. Niederreiter, } {\em Uniform distribution of sequences,}
Wiley-Interscience [John Wiley \& Sons], New York-London-Sydney, 1974.

\bibitem{KuMos80}{ K. Kuratowski, A. Mostowski,} {\em Set theory}, Nauka, Moscow , 1980.
(in Russian).


\bibitem{Pan89-2}{G.R. Pantsulaia,} {\em Independent families of sets and some
of their applications to measure theory}, Soobshch. Akad. Nauk Gruzin. SSR  {\bf 134(1)} (1989),
29--32 (in Russian).

\bibitem{Pan94}{G.R. Pantsulaia,} {\em  On non--elementary extensions of the Haar measure},
Reports of Enlarged Sessions of the Seminar of I.Vekua
Inst. Appl. Math., Tbilisi, {\bf 9(13)} (1994), 40--43.


\bibitem{Pan95}{G.R. Pantsulaia,}{\em Density points and invariant extensions of the Lebesgue measure}(in Russian),
 Soobshch. Akad. Nauk Gruzii {\bf 151(2)} (1995), 216--219.


\bibitem{Pan04-3}{G.R. Pantsulaia,} {\em An applications of independent
families of sets to the measure extension problem,} Georgian Math. J.,{\bf 11(2)}(2004), 379--390.



\bibitem{Pan07}{G.R. Pantsulaia ,}  {\em  Invariant and quasiinvariant measures in infinite-dimensional topological vector spaces.} Nova Science Publishers, Inc., New York, 2007. xii+234 pp.


\bibitem{Pkhakadze1958}{S.S. Pkhakadze,} {\em K teorii lebegovskoi mery,} Trudy
Tbilisskogo Matematiceskogo Instituta 25, Tbilisi,  1958 (in Russian).


\bibitem{Shiryaev1980}
 A.N. Shiryaev, {\em Probability} (in Russian), Izd.“Nauka”, Moscow, 1980.


\bibitem{Solovay1970} R. Solovay, {\em A model of set theory in which every
set of reals is Lebesgue measurable,}  Ann. of Math. {\bf 92}
(1970), 1-56.



\bibitem{Szpil46}{ E. Szpilrajn ( E. Marczewski),} {\em On problems of the theory of
measure,} {\em Uspekhi Mat. Nauk,} {\bf 1} no. {\bf 2(12)},
(1946),179--188 (in Russian).



\bibitem{Weyl}
 H. Weyl, {\em  \'{U}ber ein Problem aus dem Gebiete der diophantischen Approximation.}  Marchr. Ges. Wiss. G\'{o}tingen. Math-phys. K1.(1916), 234-244



\bibitem{Hardy2} G. Hardy, Littlewood J.,  {\em Some problems of
diophantine approximation.}  Acta Math.{\bf 37 (1)} (1914), 193--239.



\bibitem{Hardy1}
 G. Hardy, J. Littlewood, {\em Some problems of diophantine
approximation.}  Acta Math. {\bf 37 (1)}(1914), 155--191.


\bibitem{Hlawka56}
E. Hlawka, {\rm Folgen auf kompakten Raumen,}  Abh. Math. Sent.
Hamburg {\bf 20} (1956), 223-241.


\bibitem{Hulanicki1959}
Hulanicki, A. {\em On subsets of full outer measure in products of measure spaces.} Bull. Acad. Polon. Sci. S�r. Sci. Math. Astr. Phys. {\bf 7 } (1959),  331--335.



\bibitem{Hulanicki1962}
 A. Hulanicki,  {\em Invariant extensions of the Lebesgue measure.}  Fund. Math. {\bf 51}  (1962/1963),  111--115.


\bibitem{HulanickiRyll-Nardzewski1979}
 A. Hulanicki, C. Ryll-Nardzewski, {\em Invariant extensions of the Haar measure.}  Colloq. Math. {\bf 42}  (1979), 223--227.


\bibitem{Zakrzewski95}
P. Zakrzewski, {\em  Extending isometrically invariant measures on ${\bf R}\sp n$  -- a solution to Ciesielski's query.}  Real Anal. Exchange {\bf 21(2)} (1995/96), 582--589.


 \end{thebibliography}
\end{document}